\newcolumntype{C}[1]{>{\centering\let\newline\\\arraybackslash\hspace{0pt}}m{#1}}
\theoremstyle{plain}
\newtheorem{theorem}{Theorem}
\newtheorem{lemma}{Lemma}
\theoremstyle{definition}
\newtheorem{assumption}{Assumption}
\theoremstyle{remark}
\newtheorem{remark}{Remark}
\DeclarePairedDelimiterX\Econd[2]{[}{]}{#1 \mkern2mu\delimsize\vert\mkern2mu #2}
\newcommand{\ext}[1]{\mkern#1mu}
\begin{document}
\title{Variance-Reduced Gradient Estimator for Nonconvex Zeroth-Order Distributed Optimization}
\author{Huaiyi Mu, Yujie Tang, Jie Song, and Zhongkui Li%
\thanks{The authors are with the School of Advanced Manufacturing and Robotics, Peking University, Beijing, China (e-mail: {\tt\small huaiyi.mu@stu.pku.edu.cn, yujietang@pku.edu.cn, jie.song@pku.edu.cn, zhongkli@pku.edu.cn}).}}
\date{}

\maketitle

\begin{abstract}
This paper investigates distributed zeroth-order optimization for smooth nonconvex problems, targeting the trade-off between convergence rate and sampling cost per zeroth-order gradient estimation in current algorithms that use either the $2$-point or $2d$-point gradient estimators. 
We propose a novel variance-reduced gradient estimator that either randomly renovates a single orthogonal direction of the true gradient or calculates the gradient estimation across all dimensions for variance correction, based on a Bernoulli distribution.
Integrating this estimator with gradient tracking mechanism allows us to address the trade-off. We show that the oracle complexity of our proposed algorithm is upper bounded by $\mathcal{O}(d/\epsilon)$ for smooth nonconvex functions and by $\mathcal{O}(d\kappa\ln (1/\epsilon))$ for smooth and gradient dominated nonconvex functions, where $d$ denotes the problem dimension and $\kappa$ is the condition number. Numerical simulations comparing our algorithm with existing methods confirm the effectiveness and efficiency of the proposed gradient estimator.
\end{abstract}

\section{Introduction}
\label{sec:introduction}
We consider a multi-agent system with $N$ agents, where the agents are connected by a communication network that allows them to exchange information for decentralized decision-making. The goal of this group of agents is to collaboratively minimize the global objective function
\begin{equation} \label{distributed_optimization}
    \begin{aligned}
         f(x) \coloneqq \frac{1}{N}\sum_{i=1}^N f_i(x),
    \end{aligned}
\end{equation}
i.e., to solve $\min_{x\in\mathbb{R}^d}f(x)$, in a decentralized fashion. Here $x\in\mathbb{R}^d$ is the global decision variable. Each function $f_i: \mathbb{R}^d \to \mathbb{R}$ represents the local objective function for agent~$i$, known only to the agent itself; $f_i$ is assumed to be smooth but may be nonconvex. We also impose the restriction that each agent may only use zeroth-order information of $f_i$ during the optimization procedure.

Decentralized optimization has gained considerable interest due to its broad applications in areas such as multi-agent system coordination~\cite{8695072}, power systems \cite{8006269}, communication networks \cite{10093046}, etc. For smooth and convex objective functions, the decentralized gradient descent (DGD) algorithm achieved a convergence rate of $\mathcal{O}(\frac{\log t}{\sqrt{t}})$ with decreasing step-sizes~\cite{nedic2009distributed, yuan2016convergence}. To improve efficiency, gradient tracking (GT) methods~\cite{shi2015extra, qu2017harnessing, nedic2017achieving} employ a fixed step-size, attaining a sublinear convergence rate of $\mathcal{O}(\frac{1}{t})$, comparable to centralized gradient descent method. Under the assumption of strong convexity on the objective functions, DGD can achieve a convergence rate of $\mathcal{O}(\frac{1}{t})$ as shown in \cite{pu2021sharp,8786146}, while GT achieves a linear convergence rate of $\mathcal{O}(\lambda^k)$ as shown in \cite{qu2017harnessing, 7963560, 8988200}. In many real-world applications, the objective functions can be nonconvex, making distributed nonconvex optimization critical for applications in machine learning \cite{nedic2020distributed}, sensor networks \cite{ren2021distributed}, and robotics control \cite{carnevale2024nonconvex}. For smooth nonconvex functions, DGD achieves convergence to a stationary point with a rate of $\mathcal{O}(\frac{1}{\sqrt{t}})$~\cite{3295285, zeng2018nonconvex}, while various GT methods can achieve convergence to a stationary point with a rate of $\mathcal{O}(\frac{1}{t})$ \cite{scutari2019distributed, sun2019convergence}, comparable to the results obtained in the convex case~\cite{shi2015extra}.
For distributed non-convex optimization on time-varying communication networks, \cite{tatarenko2017non} employed the perturbed push-sum method to achieve a rate of $\mathcal{O}(\frac{1}{t})$. Reference~\cite{sun2019distributed} derived lower rate bounds for distributed non-convex first-order optimization, and developed algorithms embedding the polynomial filtering techniques that can match the lower bounds. The paper~\cite{bai2024complexity} considered distributed smooth nonconvex finite-sum optimization under the Polyak--Łojasiewicz condition, achieving a linear convergence rate.

The aforementioned optimization algorithms rely on first-order information. However, in some scenarios, the gradient is unavailable or is costly to obtain, and only zeroth-order information is accessible, such as in optimization with black-box models \cite{li2021surrogate}, optimization with bandit feedback \cite{bubeck2012regret}, fine-tuning language models \cite{malladi2023fine}, etc. To address this issue, various gradient-free optimization methods have been proposed. Particularly, algorithms based on zeroth-order gradient estimators have attracted considerable attention recently due to their flexibility and scalability. 
For centralized gradient-free optimization, \cite{zhang2024boosting} proposed a one-point estimator with residual feedback for centralized online optimization. The paper~\cite{chen2025regression} introduced a regression-based single-point gradient estimator for centralized zeroth-order optimization. 
The works \cite{nesterov2017random,duchi2015optimal} investigated the 2-point zeroth-order gradient estimator for centralized problems, which produces a biased stochastic gradient by using the function values of two randomly sampled points. 
In terms of distributed zeroth-order optimization, \cite{mhanna2024zero} investigated one-point gradient estimators for distributed stochastic optimization under the convex setting. For strongly convex problems, \cite{sahu2018communication} employed a 2-point zeroth-order gradient estimator for stochastic decentralized gradient descent algorithm and achieves sublinear convergence. In \cite{hajinezhad2019zone}, a 2-point stochastic zeroth-order oracle was integrated with the method of multipliers for distributed zeroth-order optimization under various network topologies.
The paper~\cite{yi2022zeroth} combined 2-point gradient estimator with the primal-dual method, achieving linear speedup under the gradient dominance assumption on non-smooth objective functions. 
The works~\cite{lin2024decentralized,sahinoglu2024online} proposed gradient-free methods for decentralized non-smooth non-convex
optimization using 2-point gradient estimator.
In~\cite{kiefer1952stochastic}, the $2d$-point gradient estimator was proposed, where $d$ is the dimension of the state variable for each agent. The $2d$-point estimator provides more precise gradient estimates than the 2-point estimator, at the expense of higher computational complexity per construction. 
The work~\cite{9199106} combined the 2-point gradient estimator with DGD and the $2d$-point gradient estimator with GT for nonconvex multi-agent optimization, which lead to convergence rates that are comparable with their first-order counterparts. However, \cite{9199106} also argued that there seems to be a trade-off between the \emph{convergence rate} and the \emph{sampling cost per zeroth-order gradient estimation}, when one attempts to combine zeroth-order gradient estimation techniques with different distributed optimization frameworks. This trade-off arises from the inherent variance of the 2-point estimator in distributed settings and the high sampling burden of the $2d$-point estimator.

To overcome this trade-off, we aim to design a variance-reduced zeroth-order gradient estimator with a scalable sampling number of function values that is independent of the dimension $d$. Variance reduction techniques have been extensively utilized in machine learning \cite{guo2021machine} and stochastic optimization \cite{wang2013variance}. In \cite{johnson2013accelerating}, variance reduction was employed in centralized stochastic gradient descent with strongly convex objectives, achieving a linear convergence rate. 
Reference~\cite{fang2018spider} proposes the SPIDER variance reduction method for stochastic nonconvex optimization, and \cite{li2021page} introduced the PAGE variance reduction framework that employs probabilistic update for the reference gradient. \cite{liu2018zeroth} applied a 2-point gradient estimator and used variance reduction for zeroth-order nonconvex centralized stochastic optimization, achieving sublinear convergence. In \cite{kazemi2024efficient}, variance-reduced zeroth-order gradient estimator was employed for solving non-smooth composite optimization problems. Note that these works only focused on centralized problems. For decentralized finite-sum minimization, variance reduction has been used to accelerate convergence, as seen in~\cite{xin2020variance, jiang2022distributed}. 
In these works, the variance reduction techniques were employed for reducing the variance caused by the finite-sum structure but not for reducing the inherent variance of the 2-point zeroth-order gradient estimators. The work~\cite{chen2023zeroth} utilizes a 2-point gradient estimator together with variance reduction for decentralized nonconvex optimization in the stochastic setting, assuming bounded dissimilarity between local objectives.

\begin{table*}[htbp]
\label{table:comparison}
    \centering
    \begin{threeparttable}
    \caption{Oracle Complexities and Sampling Costs per Iteration of Algorithms for Deterministic Nonconvex Optimization}
    \small
    \renewcommand{\arraystretch}{1.8}
   \begin{tabular}{C{2cm}C{3cm}C{3.5cm}C{2.5cm}C{2.5cm}}
        \toprule
        &
        &
        \makecell[c]{smooth \\ nonconvex} &
        \makecell[c]{gradient\\ dominated} &
        \makecell[c]{sampling cost\\ per iteration} \\
        \midrule

        \multirow[c]{2.75}{2cm}{\makecell[c]{distributed \\ first-order}}
        & DGD
        & $\mathcal{O}(1/\epsilon^2)$~\cite{3295285}
        & $\mathcal{O}(\kappa^2/\epsilon)$~\cite{pu2021sharp} (strongly convex)
        & -- \\
        \cline{2-5}
        & gradient tracking
        & $\mathcal{O}(1/\epsilon)$~\cite{scutari2019distributed}
        & $\mathcal{O}(\kappa \ln(1/\epsilon))$~\cite{xu2021distributed} (strongly convex)
        & -- \\
        \midrule
        \multirow[c]{3}{2cm}{\makecell[c]{centralized \\ zeroth-order}}
        & \makecell[c]{\cite{nesterov2017random}}
        & \makecell[c]{$\mathcal{O}(d/\epsilon)$}
        & \makecell[c]{$\mathcal{O}(d\kappa \ln(1/\epsilon))$}  (strongly convex)
        & \makecell[c]{$\Theta(1)$}  \\
        \cline{2-5}
        & SPIDER-SZO~\cite{fang2018spider}
        & $\mathcal{O}(d/\epsilon)$
        & --
        & $\Theta(1)$ \\
        \cline{2-5}
        & SPIDER-Coord~\cite{ji2019improved}
        & $\mathcal{O}(d/\epsilon)$
        & $\mathcal{O}(d\kappa^2 \ln(1/\epsilon))$
        & $\Theta(d)$  \\
        \midrule

        \multirow[c]{2.5}{2cm}{\makecell[c]{distributed\\ nonsmooth\\ zeroth-order}}
        & $\text{DGFM}^+$~\cite{lin2024decentralized}
        & $\mathcal{O}(d^{\frac{3}{2}}/(\delta\epsilon^{\frac{3}{2}}))$ \quad (nonsmooth, stochastic)
        & \makecell[c]{--}
        & \makecell[c]{$\Theta(1/\sqrt{\epsilon})$}   \\
        \cline{2-5}
        & ME-DOL~\cite{sahinoglu2024online}
        & $\mathcal{O}(d/(\delta\epsilon^{\frac{3}{2}}))$ \ \ \ \quad (nonsmooth, stochastic)
        & --
        & $\Theta(1)$ \\
        \midrule

        \multirow[c]{6}{2cm}{\makecell[c]{distributed\\ zeroth-order}}
        & DGD-2p~\cite{9199106}
        & $\mathcal{\tilde{O}}(d/\epsilon^2)$
        & $\mathcal{O}(d\kappa^2 /\epsilon)$
        & $\Theta(1)$ \\
        \cline{2-5}
        & GT-2d~\cite{9199106}
        & $\mathcal{O}(d/\epsilon)$
        & $\mathcal{O}(d\kappa^{\frac{4}{3}} \ln(1/\epsilon))$
        & $\Theta(d)$ \\
        \cline{2-5}
        & ZONE~\cite{hajinezhad2019zone}
        & $\mathcal{O}(\gamma(d)/\epsilon^2)$
        & --
        & $\Theta(1/\epsilon)$ \\
        \cline{2-5}
        & DZO primal-dual~\cite{yi2021linear}
        & $\mathcal{O}(d/\epsilon)$
        & $\mathcal{O}(d\kappa \ln(1/\epsilon))$
        & $\Theta(d)$  \\
        \cline{2-5}
        & \textbf{our algorithm}
        & $\mathcal{O}(d/\epsilon)$
        & $\mathcal{O}(d\kappa \ln(1/\epsilon))$
        & $\Theta(1)$ \\
        \bottomrule
    \end{tabular}  
    \begin{tablenotes}[online]
    \item[1)] The listed oracle complexities are the number of zeroth-order queries needed to obtain a point $x$ satisfying $\mathbb{E}[\|\nabla f(x)\|^2]\leq\epsilon$ for the smooth nonconvex case and $\mathbb{E}[f(x)-f^\ast]\leq\epsilon$ for the gradient dominated case, respectively.
    \item[2)] The rates provided in \cite{hajinezhad2019zone} do not include explicit dependence on $d$; we use $\gamma(d)$ to denote this dependence.
    \item[3)] For distributed nonsmooth nonconvex optimization, the oracle complexity is the number of zeroth-order queries needed to obtain a point $x$ satisfying $\min\{\|g\|^2:g\in\partial_\delta f(x)\}\leq\epsilon$, where $\partial_\delta f(x)$ denotes the Goldstein $\delta$-subdifferential; see~\cite{lin2024decentralized,sahinoglu2024online} for precise definitions.
    \item[4)] The notation $\mathcal{\tilde{O}}$ omits logarithmic factors in $d$ and/or $\epsilon$.
    \end{tablenotes}
    \end{threeparttable}
\end{table*}

In this paper, we propose a new distributed zeroth-order optimization method that integrates variance reduction techniques with the gradient tracking framework, to address the trade-off between \emph{convergence rate} and \emph{sampling cost per zeroth-order gradient estimation} in existing distributed zeroth-order algorithms under the deterministic nonconvex optimization setting. Specifically, We leverage the variance reduction (VR) mechanism to design a novel variance-reduced gradient estimator for distributed nonconvex zeroth-order optimization problems, as formulated in \eqref{distributed_optimization}. We then combine this new zeroth-order gradient estimation method with the gradient tracking framework, and the resulting algorithm is able to achieve both fast convergence and low sampling cost per zeroth-order gradient estimation. We also provide rigorous convergence analysis of our proposed algorithm under the smoothness assumption as well as the gradient-dominance assumption. 
The derived oracle complexities match the state-of-the-art dependence on the dimension $d$.
To the best of the authors' knowledge, this is the first work that attempts to address the aforementioned trade-off for zeroth-order distributed optimization  with deterministic objectives for both the general nonconvex and the gradient-dominated cases.
We refer to Table~\ref{table:comparison} for a comparison of the oracle complexities and sampling costs per iteration between our algorithm and some related existing algorithms.
Numerical experiments demonstrate that our proposed algorithm enjoys superior convergence speed and accuracy compared to existing zeroth-order distributed optimization algorithms \cite{9199106, hajinezhad2019zone}, reaching lower optimization error with the same number of samples.

This article is an extension of our preliminary work in a conference submission~\cite{publications_tang}. 
Inspired by the PAGE method~\cite{li2021page}, we have redesigned our variance-reduced gradient estimator, leading to a complexity bound $\mathcal{{O}}(d/\epsilon)$ that has improved dependence on the dimension $d$. We also expand our analysis to gradient-dominated smooth nonconvex functions and derive a superior complexity bound $\mathcal{O}(d\ln(1/\epsilon))$. The Appendices of this journal version provides the complete proofs of all the theorems and critical lemmas.

\vspace{3pt}
\textbf{Notations:} The set of positive integers up to $m$ is denoted as $[m] = \{1,2,\cdots, m \}$. The $i$-th component of a vector $x$ is denoted as $[x]_i$. The spectral norm and spectral radius of a matrix $A$ are represented by $\sigma(A)$ and $\rho(A)$, respectively. For a vector $x\in\mathbb{R}^d$, $\|x\|$ refers to the $\ell_2$ Euclidean norm. For a matrix $A$, $\|A\|_2$ represents the spectral norm induced by $\|\cdot\|$. For two matrices $M$ and $N$, $M\otimes N$ denotes the Kronecker product. We denote $\mathbb{B}_d$ as the closed unit ball in $\mathbb{R}^d$, and $\mathbb{S}_{d-1} = \{ x\in\mathbb{R}^d : \|x\|=1 \}$ as the unit sphere. $\mathcal{U}(\cdot)$ denotes the uniform distribution.

\section{Formulation And Preliminaries}
\subsection{Problem Formulation}
We consider a network consisting of $N$ agents connected via an undirected communication network. The topology of the network is represented by the graph $\mathcal{G} = (\mathcal{N}, \mathcal{E})$, where $\mathcal{N}$ and $\mathcal{E}$ represent the set of agents and communication links, respectively.
The distributed consensus optimization problem~\eqref{distributed_optimization} can be equivalently reformulated as follows:
\begin{equation}\label{distributed_optimization_2}
\begin{aligned}
\min_{x_1,\ldots,x_N\in\mathbb{R}^{d}}\ \ & \frac{1}{N}\sum_{i=1}^{N} f_i(x_i) \\
\text{s.t.} \ \ \ \ & x_1=x_2=\cdots=x_N,
\end{aligned}
\end{equation}
where $x_i\in\mathbb{R}^d$ now represents the local decision variable of agent $i$, and the constraint $x_1=\cdots=x_N$ requires the agents to achieve global consensus for the final decision. During the optimization procedure, each agent may obtain other agents' information only via exchanging messages with their neighbors in the communication network. We further impose the restriction that only zeroth-order information of the local objective function is available to each agent. In other words, in each iteration, agent $i$ can query the function values of $f_i$ at finitely many points.

The following assumptions will be employed later in this paper.
\begin{assumption} \label{assumption_smooth_f^*}
Each $f_i : \mathbb{R}^d \to \mathbb{R}$ is $L$-smooth, i.e., we have 
\begin{equation} 		\|\nabla f_i(x) - \nabla f_i(y)\| \leq L\| x-y \| \label{L_smooth_assump}
\end{equation}
for all $x,y\in \mathbb{R}^d$ and $i=1,\ldots,N$. Furthermore, $f^* = \inf_{x\in\mathbb{R}^d}f(x)>-\infty$.
\end{assumption}

\begin{assumption} \label{assumption_smooth+mu}
Each $f_i : \mathbb{R}^d \to \mathbb{R}$ is $L$-smooth, and the global objective function $f : \mathbb{R}^d \to \mathbb{R}$ is $\mu$-gradient dominated, i.e., we have
\begin{subequations}
\begin{align}
\|\nabla f_i(x) - \nabla f_i(y)\| & \leq L\| x-y \|, \\
\|\nabla f(x)\|^2 & \geq 2\mu (f(x)-f^*) \label{mu_gradient}
\end{align}
\end{subequations}
for any $x,y\in \mathbb{R}^d$ and $i=1,\ldots,N$, where $f^\ast \coloneqq \inf_{x\in\mathbb{R}^d} f(x)> -\infty$.
\end{assumption}

The condition~\eqref{mu_gradient} is also know as the Polyak-Łojasiewicz inequality~\cite{polyak1964gradient, lojasiewicz1963topological}. With the gradient-dominance condition, alongside the smoothness assumption, non-convex optimization has the potential to achieve linear convergence~\cite{yi2022zeroth}.

\subsection{Preliminaries on Distributed Zeroth-Order Optimization}
When gradient information of the objective function is unavailable, one may construct gradient estimators by sampling the function values at a finite number of points, which has been shown to be a very effective approach by existing literature. We first briefly introduce two types of gradient estimators \cite{9199106} that are commonly used in noiseless distributed optimization.

Let $h:\mathbb{R}^d\rightarrow\mathbb{R}$ be a continuously differentiable function. One version of the 2-point zeroth-order gradient estimator for $\nabla h(x)$ has the following form:
\begin{equation} \label{2_point}
	\begin{aligned}
		& G_h^{(2)}(x,u,z) = d \cdot \frac{h(x+uz)-h(x-uz)}{2u}z, \\
	\end{aligned}
\end{equation}
where $u$ is a positive scalar called the \emph{smoothing radius} and $z$ is a random vector sampled from the distribution $\mathcal{U}(\mathbb{S}_{d-1})$. One can show that the expectation of the 2-point gradient estimator is the gradient of a smoothed version of the original function \cite{nemirovskij1983problem,1070486}, i.e.,
\[
\mathbb{E}_{z \thicksim \mathcal{U}(\mathbb{S}_{d-1})}[G_h^{(2)}(x,u,z)] = \nabla h^{u}(x),
\]
where $h^{u}(x) = \mathbb{E}_{y\thicksim \mathcal{U}(\mathbb{B}_d)}[h(x+uy)]$. As the smoothing radius $u$ tends to zero, the expectation of the 2-point gradient estimator approaches to the true gradient $\nabla h(x)$. 

By combining the simple 2-point gradient estimator~\eqref{2_point} with the decentralized gradient descent framework, one obtains the following algorithm for distributed zeroth-order consensus optimization~\eqref{distributed_optimization_2}:
\begin{equation}
x_i^{k+1} = \sum_{j=1}^N W_{ij}\!\left(x^k_j - \eta_k\, G_{f_j}^{(2)}(x^k_j, u^k, z^k_j)\right),
\end{equation}
which we shall call DGD-2p in this paper. Here $x^k_i$ denotes the local decision variable of agent $i$ at the $k$-th iteration, $W\in\mathbb{R}^{N\times N}$ is a weight matrix that is taken to be doubly stochastic, and $\eta_k$ is the step-size at iteration $k$. Since each construction of the 2-point gradient estimator \eqref{2_point} requires sampling only two function values, we can see that DGD-2p can achieves low sampling cost per zeroth-order gradient estimation. However, as shown by \cite{9199106}, DGD-2p achieves a relatively slow convergence rate $\mathcal{O}(\sqrt{d/m}\log m)$, where $m$ denotes the number of function value queries. \cite{9199106} argued that this slow convergence rate is mainly due to the inherent variance of the 2-point gradient estimator, bounded by
\[
\mathbb{E}_{z \thicksim \mathcal{U}(\mathbb{S}_{d-1})}\!\left[\| G_h^{(2)}(x, u, z) \|^2\right] \lesssim d\|\nabla h(x)\|^2 + u^2L^2d^2
\]
under the assumption that function $h$ is $L$-smooth. In a distributed optimization algorithm, each agent's local gradient $ \nabla f_i(x^k_i)$ does not vanish to zero even if the system has reached consensus and optimality. Consequently, the inherent variance of 2-point gradient estimator is inevitable and will considerably slow down the convergence rate.

To achieve higher accuracy for zeroth-order gradient estimation, existing literature has also proposed the $2d$-point gradient estimator:
\begin{equation} \label{2d_point}
\begin{aligned}
G_h^{(2d)}(x,u) = \sum_{l=1}^d \frac{h(x+ue_l) - h(x-ue_l)}{2u}e_l.
\end{aligned}
\end{equation}
Here $e_l \in \mathbb{R}^d$ is the $l$-th standard basis vector such that $[e_l]_j = 1$ when $j = l$ and $[e_l]_j = 0$ otherwise. It can be shown that $\|G_h^{(2d)}(x,u) -\nabla h(x) \| \leq \frac{1}{2}u L\sqrt{d}$ when $h$ is $L$-smooth (see, e.g., \cite{9199106}). Consequently, if we assume the function values of $h$ can be obtained accurately and machine precision issues in numerical computations are ignored, then the $2d$-point gradient estimator can achieve arbitrarily high accuracy when approximating the true gradient. By combining~\eqref{2d_point} with the distributed gradient tracking method, one obtains the following algorithm: 
\begin{equation} \label{GT_2d}
\begin{aligned}
x^{k+1}_i ={} & \sum_{j=1}^N W_{ij}\!\left(
x^k_j - \eta\, s^k_j
\right), \\
s^{k+1}_i ={} & \sum_{j=1}^N W_{ij}\!\left( s^k_j \!+\!  G_{f_j}^{(2d)}(x^{k+1}_j, u^{k+1}) \!-\! G_{f_j}^{(2d)}(x^k_j, u^k) \right),
    \end{aligned}
\end{equation}
which we shall call GT-$2d$. Here the auxiliary state variable $s^k_j$ in \eqref{GT_2d} tracks the global gradient across iterations. Distributed zeroth-order optimization algorithms that utilize the $2d$-point gradient estimator, such as GT-$2d$, can achieve faster convergence due to more precise estimation of the true gradients that allows further incorporation of gradient tracking techniques. However, GT-$2d$ has higher sampling cost per gradient estimation compared to DGD-2p: As shown in \eqref{2d_point}, $2d$ points need to be sampled for each construction of the gradient estimator. This high sampling cost may lead to poor scalability when the dimension $d$ is large.

We remark that the $2d$-point gradient estimator \eqref{2d_point} can also be interpreted as the expectation of the following \emph{coordinate-wise} gradient estimator:
\begin{equation} \label{directional_gradient}
    \begin{aligned}
         G_h^{(c)}(x,u,l) = d \cdot \frac{h(x + u e_l) - h(x - u e_l)}{2u} e_l,\ \  l\in [d],
    \end{aligned}
\end{equation}
and we have
\begin{equation} \label{expectation_2d}
\begin{aligned}
G_h^{(2d)}(x,u) = \mathbb{E}_{l\sim\mathcal{U}[d]}\!\left[G_h^{(c)}(x,u,l)\right],
\end{aligned}
\end{equation}
where $\mathcal{U}[d]$ denotes the discrete uniform distribution over the set $\{1,\ldots,d\}$. The coordinate-wise gradient estimator in \eqref{directional_gradient} shares a similar structure with the 2-point gradient estimator in \eqref{2_point}. The key difference is that in \eqref{directional_gradient}, we restrict the perturbation direction $e_l$ to lie in the $d$ orthogonal directions associated with the standard basis, instead of uniformly sampled from the unit sphere.

\section{Our Algorithm}
To address the trade-off between convergence rate and sampling cost per gradient estimation in zeroth-order distributed optimization, we employ a variance reduction mechanism \cite{li2021page} to design an improved gradient estimator. The intuition is to combine the best of both worlds, i.e., the precise approximation feature of the $2d$-point gradient estimator and the low-sampling feature of the 2-point gradient estimator.

Let $k$ denote the iteration number. For each agent, We use a random variable $\zeta_i^k $ generated from the Bernoulli distribution $ \mathrm{Ber}(p)$ as an activation indicator for updating the gradient estimation of the agents. We then propose the variance-reduced gradient estimator (VR-GE) $g_i^k$ as follows:

\begin{equation} \label{algorithm_i}
    \begin{aligned}
            g_i^k \!=\!\! \begin{cases}
        G_{f_i}^{(2d)}(x_i^k, u_i^k),  & \zeta_i^k \!=\! 1, \\
        g_i^{k-\!1} \!+\! G_{f_i}^{(c)}(x_i^k, u_i^k, l_i^k) \!-\! G_{f_i}^{(c)}(x_i^{k-\!1}, u_i^{k-\!1}, l_i^k),  & \zeta_i^k \!=\! 0.
    \end{cases}
    \end{aligned}
\end{equation}
When $\zeta_i^k = 1$, agent $i$ updates $g_i^k$ using the $2d$-point gradient estimator. This ensures an accurate gradient estimation during iteration $k$ at the cost of $2d$ sample points. When $\zeta_i^k = 0$, agent $i$ randomly selects one orthogonal direction $l_i^k$ from all dimensions, i.e., $l_i^k \sim\mathcal{U}[d]$. The agent then constructs the coordinate-wise gradient estimators $G_{f_i}^{(c)}(x_i^k , u_i^k , l_i^k)$ and $G_{f_i}^{(c)}(x_i^{k-1}, u_i^{k-1}, l_i^k)$ using the values of state variables from two consecutive iterations, $x^k_i$ and $x_i^{k-1}$. Subsequently, it updates $g_i^k$ using the prior information from $g_i^{k-1}$ as a basis and renovates the $l_i^k$th component of $g_i^k$ by employing the variation in coordinate-wise gradient estimator. This enables gradient updating along the direction $l_i^k$ using only 4 sampling points.

It is not hard to show that the local gradient estimator $g_i^k$ can track the true gradient $\nabla f_i(x_i^k)$ in expectation with high accuracy. Indeed, given the randomness of $\zeta_i^k$ and $l_i^k$, we can derive that
\begin{equation} \label{expectation}
    \begin{aligned}
        \mathbb{E}_{\zeta_i^k ,l_i^k}\Econd*{g_i^k}{x^k_i,x_i^{k-1}}
          ={}  & pG_{f_i}^{(2d)}(x^k_i,u^k_i) + (1-p)\Big(\mathbb{E}_{\zeta_i^k ,l_i^k}\Econd*{g_i^{k-1}}{x^k_i,x_i^{k-1}}  \\
          & \qquad +G_{f_i}^{(2d)}(x^k_i,u^k_i)  - G_{f_i}^{(2d)}(x_i^{k-1},u_i^{k-1})\Big), 
    \end{aligned}
\end{equation}
where we have used~\eqref{expectation_2d} in the equality. Taking the total expectation and applying mathematical induction, it is straightforward to derive $\mathbb{E}\!\left[g_i^k\right]=\mathbb{E}\big[G_{f_i}^{(2d)}\!(x^k_i,\!u^k_i)\big]$. Considering that $\big\|G_{f_i}^{(2d)}(x^k_i,u^k_i) -\nabla f_i(x^k_i)\big\|\leq \frac{1}{2}u^k_i L\sqrt{d}$ when $f_i$ is $L$-smooth, we then obtain $\big\|\mathbb{E}\!\left[g_i^k\right]-\mathbb{E}\!\left[\nabla f_i(x_i^k)\right]\big\|\leq \frac{1}{2}u^k_i L\sqrt{d}$. By selecting a sufficiently small smoothing radius $u_i^k$, the expectations of the gradient estimator $g_i^k$ and the true gradient will be aligned.

The expected number of function value samples required per construction of VR-GE is $4+(2d-4)p$. For $d \geq 3$, by choosing $p = \frac{C}{2d-4}$ for some positive constant $C$, this becomes $4+C$ which is independent of the dimension $d$. This gives VR-GE the potential to decrease the sampling cost in high-dimensional zeroth-order distributed optimization by appropriately adjusting the probability $p$. In the following section, specifically in Lemma~\ref{g^*-nabla_f}, we will rigorously analyze the variance of VR-GE and demonstrate its variance reduction property.

In designing our distributed zeroth-order optimization algorithm, we further leverage the gradient tracking mechanisms. Existing literature (including \cite{shi2015extra,qu2017harnessing,nedic2017achieving}, etc.) has demonstrated that gradient tracking mechanisms help mitigate the gap in the convergence rates between distributed optimization and centralized optimization when the objective function is smooth. Drawing inspiration from this advantage, we incorporate the variance-reduced gradient estimator with gradient tracking mechanism to design our algorithm.

The details of the proposed algorithm are outlined in Algorithm~\ref{main_algorithm}. Here $\alpha>0$ is the step-size; Steps 1 and 5 implement the gradient tracking mechanism, while Steps 2--4 implement our proposed variance-reduced gradient estimator~\eqref{algorithm_i}. The convergence guarantees of Algorithm~\ref{main_algorithm} will be provided and discussed in the next section.

\begin{algorithm}[tbh]
\caption{Distributed Zeroth-Order Optimization Algorithm with Variance Reduced Gradient Tracking Estimator}\label{main_algorithm}
 \begin{algorithmic}
 \STATE { Initialization : $ x_i^0 = \mathbf{0}_d , s_i^0 = g_i^0 = G_{f_i}^{(2d)}(x_i^0, u_i^0) $. } 
  \item {\textbf{for} $k=0,1,2,\cdots$ \textbf{do}}
  \item {\quad \textbf{for each} $i\in[N]$ \textbf{do}}
  \item {\quad 1. Update $x_i^{k+1}$ by \[x_i^{k+1} = \sum_{j=1}^N W_{ij} (x_j^k - \alpha s_j^k).\]}
  \item {\quad 2. Select $l_i^{k+1}$ uniformly at random from $[d]$. }
  \item {\quad 3. Generate $\zeta_i^{k+1}\sim \mathrm{Ber}(p).$}
  \item {
  \quad 4. Construct the VR-GE $g_{i}^{k+1}$ by: \\
  \quad\phantom{4. }
  If $\zeta_i^{k+1} = 1$, compute
  \[ g_i^{k+1} = G_{f_i}^{(2d)}(x_i^{k+1}, u_i^{k+1}); \] \\ 
  \quad\phantom{4. }
  If $\zeta_i^{k+1} = 0$, compute 
  \[
  \begin{aligned}
    g_i^{k+1} ={} & g_i^k + G_{f_i}^{(c)}(x_i^{k+1}, u_i^{k+1}, l_i^{k+1})- G_{f_i}^{(c)}(x_i^k, u_i^k, l_i^{k+1}).
  \end{aligned} \]
  }
  \item {\quad 5. Update $s^{k+1}_i$ by
  \[
  \begin{aligned}
  s_i^{k+1} ={} & \sum_{j=1}^N W_{ij} (s_j^k + g_j^{k+1} - g_j^k).
  \end{aligned}
  \]
  }
  \item {\quad \textbf{end}}
  \item {\textbf{end}}
 \end{algorithmic}
\end{algorithm}

\section{Convergence Results} \label{main_results}
In this section, we present the convergence results of Algorithm 1 under Assumption 1 and Assumption 2, respectively. We provide proof outlines of Theorem~\ref{theorem1} and Theorem~\ref{theorem2} in Section~\ref{convergence_analysis}, while detailed proofs of critical lemmas are postponed to the Appendices.

For the subsequent analysis, we denote
\[
x^k = \begin{bmatrix}
x^k_1 \\ \vdots \\ x^k_N
\end{bmatrix},\ 
s^k = \begin{bmatrix}
s^k_1 \\ \vdots \\ s^k_N
\end{bmatrix},\ 
g^k = \begin{bmatrix}
g_1^k \\ \vdots \\ g_N^k
\end{bmatrix},\ 
\nabla F(x^k) = \begin{bmatrix}
\nabla f_1(x_1^k) \\ \vdots \\
\nabla f_N(x_N^k)
\end{bmatrix},
\]
and define the following quantities:
\begin{equation*}
\begin{aligned}
\delta^k &=\mathbb{E}\!\left[ f(\bar{x}^k) \right] -f^*, &
E_x^k &= \mathbb{E}\!\left[ \big\| x^k - \mathbf{1}_N \otimes \bar{x}^k \big\|^2 \right], \\
E_s^k &=\mathbb{E}\!\left[ \big\| s^k - \mathbf{1}_N \otimes \bar{g}^k \big\|^2 \right], &
E_g^k &=  \mathbb{E}\!\left[ \big\| g^k - \nabla F(x^k) \big\|^2 \right],
\end{aligned}
\end{equation*}
where $\bar{x}^k = \frac{1}{N}\sum_{i=1}^N x_i^k$ and $ \bar{g}^k = \frac{1}{N}\sum_{i=1}^N g_i^k $. Here, $\delta^k$ quantifies the optimality gap in terms of the objective value, $E_s^k$ and $E_g^k$ characterize the tracking errors, and $E_x^k$ characterizes the consensus error. We also denote $\sigma = \big\| W - \frac{1}{N}\mathbf{1}_N\mathbf{1}_N^T \big\|_2$. Furthermore, we introduce the following auxiliary quantities:
\[
C_u=d\!\left[(1-p)\!\left(4d+\frac{2}{p}\right)+\frac{p}{4}\right],
\ \ \chi = \frac{1}{4} - \frac{1}{8}\sqrt{3+\sigma^2}.
\]
It can be checked that $\chi\in\big(\frac{1-\sigma^2}{32},\frac{1-\sigma^2}{29}\big)$.

\begin{theorem} \label{theorem1}
Under Assumption~\ref{assumption_smooth_f^*}, suppose the parameters of Algorithm~\ref{main_algorithm} satisfy the following conditions: i) $p\in (0 ,1]$; ii) $\sum_{\tau=0}^{\infty} (u_i^{\tau})^2 < \infty$ for all $i$; iii) $u_i^k$ is non-increasing; iv) the step-size is given by
\begin{align*}
& \alpha L = c\sqrt{\frac{p}{d(1-p)+1}},
\end{align*}
where $c$ is a positive constant bounded by $c\leq\big(\frac{1-\sigma^2}{28}\big)^2$. Denote
\[
R_0 = \frac{1}{N}\!\left[(E_g^0)^2+\frac{48c^2p}{1\!-\!\sigma^2}(E_s^0)^2\right]^{\frac{1}{2}},
R_u=\frac{C_u}{Np}\sum_{\tau=0}^\infty\sum_{i=1}^N (u_i^\tau)^2.
\]
Then we have
\begin{equation}
\label{eq:smooth_stationarity_rate}
		  \frac{1}{k}\sum_{\tau=0}^{k-1}  \mathbb{E}\!\left[ \big\| \nabla f(\bar{x}^{\tau}) \big\|^2 \right]
		  \leq \frac{1}{k} \!\left( \frac{2}{\alpha}\delta^0 +  \frac{4R_0}{\chi p}  + \frac{9L^2 R_u}{2\chi } \right)\!,
\end{equation}
\begin{equation}
\label{eq:smooth_x_consensus_rate}
		  \frac{1}{kN}\sum_{\tau=0}^{k-1}E_x^k
		  \leq \frac{1}{k} \Bigg(\frac{216c^2}{\alpha L^2\chi} \delta^0 + \frac{3R_0}{\chi pL^2}  +  \frac{5R_u}{ 2\chi } \Bigg),
\end{equation}
and
\begin{equation}
\begin{aligned}
		  \frac{1}{kN}\sum_{\tau=0}^{k-1}\sum_{i=1}^N  \mathbb{E} \!\left[ \left\|s_i^{\tau} - \nabla f(\bar{x}^{\tau}) \right\|^2 \right]
		  \leq{} & \frac{1}{k} \Bigg( \frac{108c^2}{\alpha^2L\chi} \delta^0 + \frac{3R_0}{2\alpha L\chi p}  + \frac{5LR_u}{4\alpha \chi}  \Bigg),
\end{aligned}
\end{equation}
\end{theorem}

Theorem~\ref{theorem1} shows that the convergence rate of Algorithm 1 under Assumption 1 is $\mathcal{O}(\frac{1}{k})$, which aligns with the rate achieved for distributed nonconvex optimization with gradient tracking using first-order information~\cite{scutari2019distributed}. In addition, each iteration of VR-GE requires $4 + (2d-4)p$ function value queries on average. As long as $p<1$, the averaged sampling number for VR-GE is less than that for the $2d$-point estimator.

We next provide some discussions on the query complexities of Algorithm 1 under different choices of~$p$.

1) Assuming $p = 1/d$ (or $p=\gamma/d$ for some numerical constant $\gamma>0$), the sampling cost per iteration on average is $\Theta(1)$ and the step-size $\alpha$ is $\mathcal{O}(1/d)$ in terms of dependence on dimension $d$. By choosing the smoothing radii to satisfy $\sum_{\tau=0}^\infty (u_i^\tau)^2\propto d^{-2}$, the convergence rate~\eqref{eq:smooth_stationarity_rate} becomes $\mathcal{O}( d/m )$ with respect to the number of function value queries $m$, which can be justified by simple algebraic calculation. One can also obtain oracle complexity result for Algorithm~\ref{main_algorithm} from this convergence rate result: Under Assumption~\ref{assumption_smooth_f^*}, given an arbitrary $\epsilon>0$, the number of zeroth-order queries per agent needed to achieve $\frac{1}{k}\sum_{\tau=0}^{k-1}\mathbb{E}[\|\nabla f(\bar x^\tau)\|^2]\leq\epsilon$ can be upper bounded by $\mathcal{O}(d/\epsilon)$.

2) When $p = 1$, Algorithm 1 reduces to GT-$2d$~\cite{9199106}. In this case, the sampling cost per iteration is $\Theta(d)$, and the step-size $\alpha$ required by Theorem~\ref{theorem1} is $\mathcal{O}(1)$ in terms of dependence on dimension $d$. As a result, the rate given by~\eqref{eq:smooth_stationarity_rate} will reduce to the existing result $\mathcal{O}( d/m )$ given in~\cite{9199106}.

    We point out that the complexity bound of $\mathcal{O}\big(d/\epsilon\big)$ for Algorithm~\ref{main_algorithm} is as favorable as the complexity bound for GT-$2d$ in~\cite{9199106} and DZO in \cite{yi2021linear} in terms of the dependence on $\epsilon$ and the problem dimension $d$.
    This indicates that our algorithm achieves the state-of-the-art complexity result concerning its dependence on $\epsilon$ and $d$, while maintaining a constant expected number of samples per iteration by suitably choosing the probability $p$, regardless of the size of the problem dimension. This approach can potentially decrease the execution time for a high-dimensional distributed optimization algorithm under limited resources. As demonstrated in the simulation section, Algorithm 1 converges faster than GT-$2d$ and DZO and achieves higher accuracy than DGD-2p with the same number of samples (i.e., zeroth-order queries).

Next, we show the convergence result under the Polyak-Łojasiewicz condition in addition to smoothness.
\begin{theorem} \label{theorem2}
Under Assumption~\ref{assumption_smooth+mu}, suppose the parameters of Algorithm~\ref{main_algorithm} satisfy the same conditions as in Theorem~\ref{theorem1}.
Then we have 
\[
\begin{aligned}
    \mathbb{E}\!\left[f(\bar{x}^k)\right]-f^* 
     \leq{} \mathcal{O}(\lambda^k) +  \frac{9\alpha L^2}{2\chi } \mathfrak{R}_u^k ,
\end{aligned}
\]
and
\begin{align*}
     \frac{1}{N} \sum_{i=1}^N  \mathbb{E}\!\left[ \left\|x_i^k - \bar{x}^k \right\|^2 \right] 
     \leq{} &  \mathcal{O}(\lambda^k) + \frac{9}{8}p \mathfrak{R}_u^k , \\
     \frac{1}{N} \!\sum_{i=1}^N  \mathbb{E} \!\left[ \left\|s_i^{k} - \nabla f(\bar{x}^k) \right\|^2 \right] 
     \leq{} & \mathcal{O}(\lambda^k) + \frac{9 Lp}{16\alpha } \mathfrak{R}_u^k,
\end{align*}
where
\begin{align*}
\lambda &= \max \!\left\{ 1-\alpha\mu, 1-\frac{1}{2}\chi p \right\},\\ 
\mathfrak{R}_u^k &= \frac{C_u}{pN}\sum_{\tau=0}^{k-1}\lambda^{\tau}\sum_{i=1}^N\big(u_i^{k-\tau-1}\big)^2.
\end{align*}
\end{theorem}

From Theorem~\ref{theorem2}, we can further establish the oracle complexity of our algorithm when the objective functions are smooth and gradient-dominated. Specifically, when one chooses $p\propto \frac{1}{d}$, we have $\alpha\propto 1/d$ and $1-\lambda=\Theta(1/d)$. In addition,  by choosing $u_i^k$ to be sufficiently small, $\frac{9\alpha L^2}{2\chi}\mathfrak{R}_u^k$ will be dominated by the first term $\mathcal{O}(\lambda^k)$. Therefore, to achieve $\mathbb{E}[f(\bar{x}^k)]-f^\ast\leq\epsilon$, the number of zeroth-order queries per agent needed to achieve $\mathbb{E}\!\left[f(\bar{x}^k)\right]-f^\ast\leq\epsilon$ can be upper bounded by $\mathcal{O}\big(\frac{1}{1-\lambda}\ln (1/\epsilon) \big)
=\mathcal{O}(d\kappa\ln (1/\epsilon))$, where $\kappa=L/\mu$ is the condition number of the problem. 
We also point out that the oracle complexity $\mathcal{O}(d\kappa\ln (1/\epsilon))$ is consistent with the state-of-the-art result regarding its dependence on $\epsilon$, $d$ and~$\kappa$.

\section{Theoretical Analysis} \label{convergence_analysis}
In this section, we provide the theoretical proofs for the convergence and complexity performance of Algorithm~\ref{main_algorithm}, as outlined by the theorems in Section~\ref{main_results}.

\subsection{Bounding the Variance of VR-GE}
The variance of VR-GE is essential for convergence proof of Algorithm \ref{main_algorithm} and we provide analysis details in this subsection. We first rewrite Algorithm \ref{main_algorithm} as follows:
\begin{subequations} \label{algorithm_compact}
	\begin{align}
		& x^{k+1} = (W \otimes I_d)(x^k -\alpha s^k), \label{algorithm_compact_1} \\
		& s^{k+1} = (W \otimes I_d)(s^k + g^{k+1} - g^k).\label{algorithm_compact_2}
	\end{align}
\end{subequations}

We now derive a bound on the expected difference between variance-reduced gradient estimator and the true gradient in the following lemma.

\begin{lemma} \label{g^*-nabla_f}
	Suppose each $f_i : \mathbb{R}^d \to \mathbb{R}$ is $L$-smooth. Let $g_i^k$ be generated by~\eqref{algorithm_i}. Then it holds that
	\begin{equation} \label{variance}
	\begin{aligned}
			\mathbb{E}\!\left[ \left\| g_i^{k+1} - \nabla f_i(x_i^{k+1}) \right\|^2 \right] 
			\leq{} & (1-p)\!\left(1+\frac{p}{2}\right)\mathbb{E}\!\left[ \left\| g_i^k - \nabla f_i(x_i^k) \right\|^2 \right] + C_uL^2(u_i^k)^2 \\
            & + 6d(1-p)L^2\,\mathbb{E}\!\left[\left\| x_i^{k+1} - x_i^k \right\|^2\right],	
	\end{aligned}
	\end{equation}
    where $C_u=d((1-p)(4d+2p^{-1})+p/4)$.
\end{lemma}

\begin{proof}
See Appendix~\ref{appendix:proof_lemma_g^*-nabla_f}.
\end{proof}

\begin{remark}
The bound~\eqref{variance} demonstrates a contraction factor $(1-p)(1+p/2)=1-p/2-p^2/2$ for the estimation error of VR-GE across successive iterations. Consequently, as Algorithm \ref{main_algorithm} approaches consensus and optimum and the smoothing radius approaches zero, the estimation error between the VR-GE and the true gradient diminishes. Thus, VR-GE offers reduced variance compared to the 2-point gradient estimator while requiring fewer samples than the $2d$-point gradient estimator on average. 
\end{remark}

\subsection{Proof of Theorem 1}
The proof relies on four lemmas. The first lemma analyzes the evolution of function value $f(\bar{x}^k)$ by exploiting the $L$-smoothness property. 

\begin{lemma} \label{error_optimization}
	Under Assumption~\ref{assumption_smooth_f^*}, we have
\begin{equation}\label{fbarTemp}
\mathbb{E}\!\left[\big\|\nabla f(\bar{x}^k)-\bar{g}^k\big\|^2\right]
\leq\frac{2}{N}E_g^k+\frac{2L^2}{N}E_x^k,
\end{equation}
and
 \begin{align}
     		\delta^{k+1} \leq{} & \delta^k - \frac{\alpha}{2}\mathbb{E}\!\left[ \big\| \nabla f(\bar{x}^k) \big\|^2 \right] + \frac{\alpha}{N} E_g^k + \frac{\alpha L^2}{N}E_x^k \nonumber\\
            & - \left( \frac{1}{2\alpha} \!-\! \frac{L}{2} \right) \mathbb{E}\!\left[\left\| \bar{x}^{k+1} - \bar{x}^k \right\|^2\right]. 
       \label{delta_k}
 \end{align}
\end{lemma}

\begin{proof}
See Appendix~\ref{appendix:proof_lemma_error_optimization}.
\end{proof}

Lemma~\ref{error_optimization} derives a bound for the optimization error $\delta^k$. We need to further bound the consensus error $E_x^k$, alongside the tracking errors $E_s^k$ and $E_g^k$. This is tackled by the following lemma.

\begin{lemma} \label{LMI_lemma}
	Suppose we choose $p\in (0 ,1] $ and $\alpha L = c\sqrt{\frac{p}{d(1-p) + 1}} $, where $c$ is a positive constant bounded by $c\leq \big(\frac{1-\sigma^2}{28}\big)^2$. Then we have the following component-wise inequality:
\begin{equation} \label{linear_matrix}
	v^{k+1} \leq A v^k + b^k,
\end{equation}
where $v^k = \! \left[ E_x^{k} , E_g^{k} , E_s^k \right]^T$, and
\begin{align*}
	    A ={} & \! \begin{bmatrix}
            \mfrac{1+2\sigma^2}{3}   &      0      &   \mfrac{3\alpha^2}{1-\sigma^2}  \\[3pt]
            48 d(1\!-\!p)L^2       &   (1\!-\!p)\big(1\!+\!\mfrac{p}{2}\big)     & 24 d(1\!-\!p)L^2\alpha^2   \\[3pt]
            \mfrac{96( 3d(1\!-\!p) \!+\! 1 )L^2}{1-\sigma^2} & \mfrac{18}{1-\sigma^2}   &  \mfrac{2+\sigma^2}{3} 
    \end{bmatrix}\!,\\ 
	b^k ={} & \!\begin{bmatrix}
		0 \\[3pt]
		{ 24 Nd(1 \!-\! p)L^2 } \mathbb{E}[\| \bar{x}^{k+1} - \bar{x}^k \|^2] + L^2C_u\sum_i(u_i^k)^2 \\[3pt]
        \ext{-2}
		\mfrac{48( 3d(1\!-\!p) \!+\! 1 )NL^2}{1\!-\!\sigma^2}\mathbb{E}[ \| \bar{x}^{k+1} \!-\! \bar{x}^k \|^2] \!+\! \mfrac{6L^2C_u\sum_i(u_i^k)^2}{1\!-\!\sigma^2}
        \ext{-2}
	\end{bmatrix}\!\!.
\end{align*}
\end{lemma}

\begin{proof}
See Appendix~\ref{appendix:proof_lemma_LMI_lemma}.
\end{proof}

Next, we derive a bound on the accumulated consensus error and tracking errors over iterations using Lemma~\ref{LMI_lemma}.
\begin{lemma} \label{Sigma_inequality}
	Suppose we choose $p\in(0 ,1] $ and $\alpha L = c\sqrt{\frac{p}{d(1-p) + 1}} $, where $c$ is a positive constant bounded by $c\leq \big(\frac{1-\sigma^2}{28}\big)^2$. We denote 
    \[
    \begin{aligned}
    E_c^k &= E_x^{k} + \frac{18\alpha^2}{(1-\sigma^2)^2} E_s^{k}, \\
    E_f^k &= \left[\frac{4(3d(1-p)+1)L^2(1-\sigma^2)^3}{81\alpha^2} (E_c^k)^2 + (E_g^k)^2\right]^{\frac{1}{2}}.
    \end{aligned}
    \]
    Then we have
\begin{equation} \label{EfLemmaForm}
    \begin{aligned}
        E_f^{k+1} \leq{} &  9( 3d(1\!-\!p) \!+\! 1 ) NL^2\mathbb{E}\!\left[ \left\| \bar{x}^{k+1} - \bar{x}^k \right\|^2 \right] \\
        & + (1-\chi p) E_f^k + \frac{9L^2C_u\sum_i(u_i^k)^2}{8},
    \end{aligned}
\end{equation}
where $\chi = \frac{1}{4} - \frac{1}{8}\sqrt{3+\sigma^2}$. Furthermore,
\begin{equation} \label{sumEf}
    \begin{aligned}
        \sum_{\tau=0}^{k} E_f^{\tau} \leq{} & 
        \frac{9(3d(1\!-\!p) \!+\! 1 ) NL^2}{\chi p} \sum_{m=0}^{k-1} \mathbb{E}\!\left[\left\| \bar{x}^{\tau+1} - \bar{x}^\tau \right\|^2\right] \\
        & + \frac{1}{\chi p}E_f^0 + \frac{9L^2C_u}{ 8\chi p}\sum_{\tau=0}^{k-1} \sum\nolimits_i(u_i^\tau)^2.
    \end{aligned}
\end{equation}
\end{lemma}

\begin{proof}
See Appendix~\ref{appendix:proof_lemma_Sigma_inequality}.
\end{proof}

The inequality~\eqref{sumEf} will be applied in analyzing the convergence of $\bar{x}^k$ to stationarity, while the inequality~\eqref{EfLemmaForm} will be used to analyze the consensus error. Following this, we derive a bound for $\mathbb{E}[\| \bar{x}^{k+1} - \bar{x}^k \|^2]$, which will subsequently be used in the analysis of the consensus error.
\begin{lemma} \label{barxkkbarxk}
    Suppose we choose $p\in(0 ,1] $ and $\alpha L = c\sqrt{\frac{p}{d(1-p) + 1}} $, where $c$ is a positive constant bounded by $c\leq \big(\frac{1-\sigma^2}{28}\big)^2$. We have 
    \begin{equation} \label{barxkkLemmaForm}
        \begin{aligned}
            \mathbb{E}\!\left[\left\| \bar{x}^{k+1} - \bar{x}^k \right\|^2\right] \leq 2\alpha^2\mathbb{E}\!\left[\big\| \nabla f(\bar{x}^k) \big\|^2\right] + \frac{5\alpha^2}{N}E_f^k.
        \end{aligned}
    \end{equation}
\end{lemma}

\begin{proof}
See Appendix~\ref{appendix:proof_lemma_barxkkbarxk}.
\end{proof}

Based on Lemmas~\ref{error_optimization},~\ref{Sigma_inequality}, and~\ref{barxkkbarxk}, we are now ready to prove Theorem 1. Since $\delta^k\geq 0$ for all $k$, we derive from~\eqref{delta_k} that
\begin{equation}\label{deltaInitial}
\begin{aligned}
     0 \leq{} & \delta^0 - \frac{\alpha}{2} \sum_{\tau=0}^{k-1} \mathbb{E}\!\left[\big\| \nabla f(\bar{x}^{\tau}) \big\|^2\right] + \sum_{\tau=0}^{k-1} \left( \frac{\alpha}{N} E_g^{\tau} + \frac{\alpha L^2}{N}E_x^{\tau} \right) \\
        & - \left(\frac{1}{2\alpha} - \frac{L}{2}\right) \sum_{\tau=0}^{k-1} \mathbb{E}\!\left[\left\| \bar{x}^{\tau+1} - \bar{x}^{\tau} \right\|^2\right].  
\end{aligned}
\end{equation}
Using $\alpha L = c\sqrt{\frac{p}{d(1-p) + 1}}$ and $c\leq \big(\frac{1-\sigma^2}{28}\big)^2 $, we derive from the definitions of $E_c^k$ and $E_f^k$ in Lemma~\ref{Sigma_inequality} that $E_g^k \leq E_f^k$ and
\begin{equation}\label{ExEcComp}
    E_x^k \leq E_c^k \leq \!\left[\frac{81\alpha^2}{4(3d(1\!-\!p)\!+\!1)L^2(1\!-\!\sigma^2)^3}\right]^{\!\frac{1}{2}} \!E_f^k < \frac{E_f^k}{L^2}.
\end{equation}
Combining them with~\eqref{deltaInitial}, we have
\begin{equation}
    \begin{aligned}
         0 \leq{} & \delta^0 - \frac{\alpha}{2} \sum_{\tau=0}^{k-1} \mathbb{E}\!\left[\big\| \nabla f(\bar{x}^{\tau}) \big\|^2\right]  + \frac{2\alpha}{N} \sum_{\tau=0}^{k-1} E_f^{\tau} \\
        & - \left(\frac{1}{2\alpha} - \frac{L}{2}\right) \sum_{\tau=0}^{k-1} \mathbb{E}\!\left[\left\| \bar{x}^{\tau+1} - \bar{x}^{\tau} \right\|^2\right] \\
        \leq{} &  \delta^0 
        \!-\! \frac{\alpha}{2} \sum_{\tau=0}^{k-1} \mathbb{E}\!\left[\big\| \nabla f(\bar{x}^{\tau}) \big\|^2\right]
        \!+\! \frac{2\alpha}{\chi pN}E_f^0 + \frac{9\alpha L^2}{4\chi}R_u \\
        &\!-\! \left[\frac{1\!-\!\alpha L}{2\alpha}\!-\! \frac{ 18(3d(1\!-\!p) \!+\! 1 )\alpha L^2}{\chi p}\right]\!\sum_{\tau=0}^{k-1} \mathbb{E}\ext{-2}\Big[\ext{-2}\big\| \bar{x}^{\tau+1} \!-\! \bar{x}^{\tau} \big\|^2\Big]
    \end{aligned}
\end{equation}
where we have used \eqref{sumEf} and the definition of $R_u$ in the second inequality.
By $\alpha L = c\sqrt{\frac{p}{d(1-p) + 1}}$ with $c\leq \big(\frac{1-\sigma^2}{28}\big)^2 $, it is straightforward to verify that $\frac{1\!-\!\alpha L}{2\alpha}\!-\! \frac{ 18(3d(1\!-\!p) \!+\! 1 )\alpha L^2}{\chi p} > 0$. Consequently, we have 
 \begin{equation} \label{deltaIniMid}
         0 \leq{} \delta^0 \!-\! \frac{\alpha}{2} \sum_{\tau=0}^{k-1}\mathbb{E}\!\left[ \big\| \nabla f(\bar{x}^{\tau})\big\|^2\right] \!+\! \frac{2\alpha}{\chi pN} E_f^0 \!+\! \frac{9\alpha L^2}{4\chi}R_u,
\end{equation}
 We can then conclude from~\eqref{deltaIniMid} that
\begin{equation} \label{nablafMid}
    \begin{aligned}
     \frac{1}{k} \sum_{\tau=0}^{k-1} \mathbb{E}\!\left[\big\| \nabla f(\bar{x}^{\tau}) \big\|^2\right] 
        \leq \frac{1}{k} \left[ \frac{2}{\alpha}\delta^0 +  \frac{4}{\chi pN} E_f^0 + \frac{9L^2}{2\chi }R_u \right].
    \end{aligned}
\end{equation}
By using $\alpha L=c\sqrt{\frac{p}{d(1-p)+1}}$, it is straightforward to check that $E_f^0\leq NR_0$. The proof of~\eqref{eq:smooth_stationarity_rate} is now completed.

Next, we proceed to examine the consensus errors.
Plugging~\eqref{barxkkLemmaForm} into~\eqref{EfLemmaForm}, we obtain 
\begin{equation}
    \begin{aligned}
        E_f^{k+1}\! \leq{} & 9(3d(1\!-\!p) \!+\! 1 ) NL^2 \Big( 2 \alpha^2 \mathbb{E} \!\left[ \big\| \nabla f(\bar{x}^k) \big\|^2 \right] \!+\! \frac{5\alpha^2}{N} E_f^k \Big)  \\
        & + (1-\chi p) E_f^k  + \frac{9L^2C_u\sum_i(u_i^k)^2}{8}  \\
        \leq{} & \left( 1 - \frac{\chi p}{2} \right)E_f^k + 54Nc^2 p\mathbb{E} \!\left[ \big\| \nabla f(\bar{x}^k) \big\|^2 \right]  \\ & + \frac{9L^2C_u\sum_i(u_i^k)^2}{8},
    \end{aligned}
\end{equation}
where we have used $9(3d(1\!-\!p) \!+\! 1 ) N\alpha^2L^2\leq 27c^2p$ and $\chi - 135c^2 > \frac{1}{2}\chi$ in the last inequality.

Note that for any nonnegative sequence $(a_m)_{m\in\mathbb{N}}$ and $\rho\in(0,1)$, we have 
\begin{equation} \label{sum_sum}
	\begin{aligned}	\sum_{\tau=1}^{k}\sum_{m=0}^{\tau-1}\rho^ma_{\tau-m-1}  ={} & \sum_{\tau=1}^{k}\sum_{m=0}^{\tau-1} \rho^{\tau-m-1}a_m \\  
		={}& \sum_{m=0}^{k-1} \rho^{-m-1}a_m \sum_{\tau=m+1}^{k} \rho^{\tau}
        \leq \frac{1}{1-\rho}\sum_{m=0}^{k-1}a_m .
	\end{aligned}
\end{equation}
Consequently, we have
\begin{equation}  \label{sumEfFinal}
    \begin{aligned}
        \sum_{\tau=0}^{k-1} E_f^{\tau} \leq{} & 
        \frac{2}{\chi p}E_f^0 + \frac{108Nc^2}{\chi } \sum_{m=0}^{k-1} \mathbb{E} \!\left[ \big\| \nabla f(\bar{x}^m) \big\|^2 \right] + \frac{9L^2C_u}{ 4\chi p}\sum_{m=0}^{k-1} \sum_i(u_i^m)^2 \\
        \leq{} & \frac{3}{\chi p} E_f^0 + \frac{216Nc^2}{\chi\alpha} \delta^0 + \frac{5NL^2}{2 \chi} R_u,
    \end{aligned}
\end{equation}
where we have used~\eqref{sum_sum} in the first inequality, and~\eqref{nablafMid} with $c\leq\frac{(1-\sigma^2)^2}{28^2}$ in the last inequality. Then, using the inequality~\eqref{ExEcComp} and $E_f^0\leq NR_0$, we derive from~\eqref{sumEfFinal} that
\begin{equation*}
        \frac{1}{N}\sum_{\tau=0}^{k-1} E_x^{\tau} \leq \frac{216c^2}{\alpha L^2\chi } \delta^0 + \frac{3R_0}{\chi pL^2} +  \frac{5R_u}{ 2\chi},
\end{equation*}
which completes the proof of~\eqref{eq:smooth_x_consensus_rate}.

From the definition of $E_f^k$ in Lemma~\ref{Sigma_inequality} and the condition on $\alpha$, we can also derive that $4L^2E_x^k+E_s^k \leq \frac{1}{4\alpha L} E_f^k$. Consequently, we have
\begin{equation*} \label{final_Es}
	\begin{aligned}
		& \frac{1}{N}\sum_{\tau=0}^{k-1} \mathbb{E}\!\left[ \left\| s^{\tau} - \mathbf{1}_N\otimes \nabla f(\bar{x}^{\tau}) \right\|^2 \right] \\
		\leq{} & \frac{3}{2N}\sum_{\tau=0}^{k-1} \mathbb{E}\!\left[ \left\| s^{\tau} \!-\! \mathbf{1}_N\otimes \bar{g}^{\tau} \right\|^2 \right] + 3\sum_{\tau=0}^{k-1} \mathbb{E} \!\left[ \left\| \bar{g}^{\tau} \!-\! \nabla f(\bar{x}^{\tau}) \right\|^2 \right] \\
		\leq {} & \frac{3}{2N}\!\sum_{\tau=0}^{k-1}\!E_s^{\tau} \!+\! 3\sum_{\tau=0}^{k-1}\!\left(  \frac{2E_g^k}{N} \!+\! \frac{2L^2E_x^k}{N} \right)\!
        \leq \frac{3\left(\frac{1}{4\alpha L}\!+\!4\right)}{2N}\!\sum_{\tau=0}^{k-1}
        \!E_f^k \\
        \leq{} & \frac{108c^2}{\chi\alpha^2L} \delta^0 + \frac{3}{2\alpha L\chi p} R_0 + \frac{5L}{4\alpha \chi} R_u,
	\end{aligned}
\end{equation*}	
where we have used~\eqref{fbarTemp} in the second inequality, and $1/(4\alpha L)+4<1/(3\alpha L)$ in the last inequality. The proof for the consensus errors of Theorem~\ref{theorem1} can now be concluded.

\subsection{Proof of Theorem 2} \label{proof_theorem2}
We derive from~\eqref{delta_k} that
\begin{equation} \label{deltakkPL}
    \begin{aligned}
        \delta^{k+1} \leq{} &  \delta^k \!-\! \frac{\alpha}{2}\mathbb{E}\!\left[\big\| \nabla f(\bar{x}^k) \big\|^2\right] \!-\! \left(\ext{-2}
        \frac{1}{2\alpha} \!-\! \frac{L}{2}
        \ext{-2}\right)\!\mathbb{E}\!\left[\left\| \bar{x}^{k+1} \!-\! \bar{x}^k \right\|^2\right] \\
        & + \frac{\alpha}{N} \left(E_f^k + L^2\cdot \frac{1}{L^2} E_f^k \right) \\
        \leq{} & ( 1-\alpha\mu )\delta^k + \frac{2\alpha}{N} E_f^k \!-\! \left(\frac{1}{2\alpha} \!-\! \frac{L}{2}\right)\!\mathbb{E}\!\left[\left\| \bar{x}^{k+1} \!-\! \bar{x}^k \right\|^2\right]\!,
    \end{aligned}
\end{equation}
where we have used~\eqref{ExEcComp} and $E_g^k\leq E_f^k$ in the first inequality, and the PL condition~\eqref{mu_gradient} in the last inequality.

Combining~\eqref{deltakkPL} and~\eqref{EfLemmaForm}, we can get
\begin{equation} \label{LMIEfDelta}
    \begin{aligned}
        \begin{bmatrix}
           \frac{4\alpha}{\chi pN} E_f^{k+1} \\ \delta^{k+1}
        \end{bmatrix}
        \leq{}{} &
        \begin{bmatrix}
            1 - \chi p & 0 \\
            \frac{1}{2}\chi p  &  1-\alpha\mu
        \end{bmatrix}
        \begin{bmatrix}
           \frac{4\alpha}{\chi pN} E_f^k \\ \delta^k
        \end{bmatrix} \\
        &+ 
        \begin{bmatrix}
           \frac{ 36( 3d(1-p) + 1 ) \alpha L^2}{\chi p} \mathbb{E}[\| \bar{x}^{k+1} - \bar{x}^k \|^2] +  \frac{9\alpha L^2C_u}{2\chi p N} \sum_i(u_i^k)^2 \\
            - (\frac{1}{2\alpha} - \frac{L}{2})\mathbb{E}[\| \bar{x}^{k+1} - \bar{x}^k \|^2]
        \end{bmatrix},
    \end{aligned}
\end{equation}
in which the inequality is to be interpreted component-wise. By denoting  $E_d^k = \delta^k +  \frac{4\alpha}{\chi pN} E_f^k$, we can derive from~\eqref{LMIEfDelta} that
\begin{equation} \label{Edkk}
    \begin{aligned}
        E_d^{k+1} \!\leq{} & (1\!-\!\alpha\mu)\delta^k \!+\! \left(\!1\!-\!\frac{1}{2}\chi p\!\right)\! \frac{4\alpha E_f^k}{\chi pN}  + \frac{9\alpha L^2C_u\!\sum\nolimits_i(u_i^k)^2}{2\chi pN}  \\
        & \!\!- \!\left[\ext{-2} \frac{1}{2\alpha} \!-\! \frac{L}{2} \!-\!  \frac{36( 3d(1\!-\!p) \!+\! 1 )\alpha L^2}{\chi p} \right]\!\mathbb{E}\!\left[\ext{-2}\big\| \bar{x}^{k+1} \!-\! \bar{x}^k \big\|^2\right] \\
        \leq{} & \lambda E_d^k + \frac{9\alpha L^2C_u}{2\chi pN}\sum\nolimits_i(u_i^k)^2,
    \end{aligned}
\end{equation}
where in the second step, we use $\frac{1}{2\alpha} - \frac{L}{2} -  \frac{36(3d(1\!-\!p) \!+\! 1 )\alpha L^2}{\chi p} > 0$ that follows from $\alpha L = c\sqrt{\frac{p}{d(1-p)+1}}$ and $c\leq\frac{(1-\sigma^2)^2}{28^2}$. We further derive from~\eqref{Edkk} by induction that
\begin{equation} \label{Edkkkk}
    \begin{aligned}
        E_d^k \leq \lambda^kE_d^0 + \frac{9\alpha L^2C_u}{2\chi pN} \sum_{m=0}^{k-1}\lambda^m  \sum_i(u_i^{k-m-1})^2.
    \end{aligned}
\end{equation}
Now, using~\eqref{ExEcComp} and the definition of $E_d$, we have $E_x^k \leq \frac{\chi pN}{4\alpha L^2}E_d^k$ and $\delta^k \leq E_d^k$. The bound for $\delta^k$ and $\frac{1}{N} \sum_{i=1}^N  \mathbb{E} [ \|x_i^k - \bar{x}^k \|^2 ] $ in Theorem~\ref{theorem2} then follow from~\eqref{Edkkkk}.

Similarly, we derive that 	
\begin{equation*}
	\begin{aligned}
		& \frac{1}{N}  \mathbb{E} \!\left[ \left\| s^{k} - \mathbf{1}_N\otimes \nabla f(\bar{x}^k) \right\|^2 \right] \\
		\leq{} &  \frac{3}{2N} \mathbb{E} \!\left[ \left\| s^{k} - \mathbf{1}_N\otimes \bar{g}^k \right\|^2 \right] + 3 \mathbb{E}\!\left[ \left\| \bar{g}^k - \nabla f(\bar{x}^k) \right\|^2 \right] \\
		\leq{} & \frac{3}{2N}E_s^{k} + 3\left( \frac{2}{N}E_g^k + \frac{2L^2}{N}E_x^k \right) \\
        \leq{} & \frac{3\left(\frac{1}{4\alpha L}\!+\!4\right)}{2N}E_f^k
        \leq{}
        \frac{1}{2\alpha LN}\frac{\chi pN}{4\alpha}E_d^k \\
        \leq{} & \frac{\chi p\lambda^k}{8\alpha^2 L}E_d^0 + \frac{9LC_u}{16\alpha N} \sum_{m=0}^{k-1}\lambda^m  \sum_i(u_i^{k-m-1})^2.
	\end{aligned}
\end{equation*}	
The proof is now complete.

\section{Numerical Simulations}

\subsection{Simulation on a Synthetic Test Case}
We consider a multi-agent nonconvex optimization problem adapted from \cite{9199106} with $N=50$ agents in the network, and the objective function of each agent is given as follows:
\begin{equation}
    \begin{aligned}
        f_i(x) = \frac{\alpha_i}{1+e^{-\xi_i^T x - v_i}} + \beta_i \ln (1+\|x\|^2),
    \end{aligned}
\end{equation}
where $\alpha_i, \beta_i, v_i \in \mathbb{R}$ are randomly generated parameters satisfying $\frac{1}{N}\sum_i \beta_i = 1 $, each $\xi_i \in \mathbb{R}^d$ is also randomly generated, and the dimension $d$ is set to $64$.

For the following numerical simulation of Algorithm 1, we set the step-size $\alpha = 0.02$ and the smoothing radius $u_i^k = 3/k^{\frac{3}{4}}$. All agents start from the same initial points to ensure consistency in the initial conditions across the network.

\subsubsection{Comparison with Other Algorithms}
Fig.~\ref{comparison_VRGE} compares Algorithm 1 with DGD-2p, GT-$2d$~\cite{9199106}, ZONE-M~\cite{hajinezhad2019zone} (with $J=100$), and DZO~\cite{yi2021linear}. In the figure, the probability used for Algorithm 1 is $p=0.1$. The horizontal axis is normalized and represents the sampling number $m$ (i.e., the number of zeroth-order queries). The two sub-figures illustrate the stationarity gap $\| \nabla f(\bar{x}^k)\|^2$ and the consensus error $\frac{1}{N}\sum_{i}\| x_i^k - \bar{x}^k \|^2$, respectively. 

By inspecting Fig.~\ref{comparison_VRGE}, we first see that the stationarity gap of DGD-2p converges faster than ZONE-M with $J=100$ and DZO, but they have generally similar convergence behavior. When comparing DGD-2p and GT-$2d$, we can see a clear difference between their convergence behavior: DGD-2p achieves fast convergence initially but slows down afterwards due to the inherent variance of the 2-point gradient estimator, whereas GT-$2d$ achieves higher eventual accuracy but slower initial convergence before approximately $1.5\times 10^4$ zeroth-order queries due to the higher sampling burden of the $2d$-point gradient estimator.

As demonstrated in Fig.~\ref{comparison_VRGE}, Algorithm 1 offers both high eventual accuracy and a fast convergence rate in terms of stationarity gap and consensus error. This improvement is attributed to the variance reduction mechanism employed in designing VR-GE, which effectively balances the sampling number and expected variance, thereby addressing the trade-off between convergence rate and sampling cost per zeroth-order gradient estimation that exists in current zeroth-order distributed optimization algorithms.

\begin{figure}[!t]
\centerline{\includegraphics[width=.55\columnwidth]{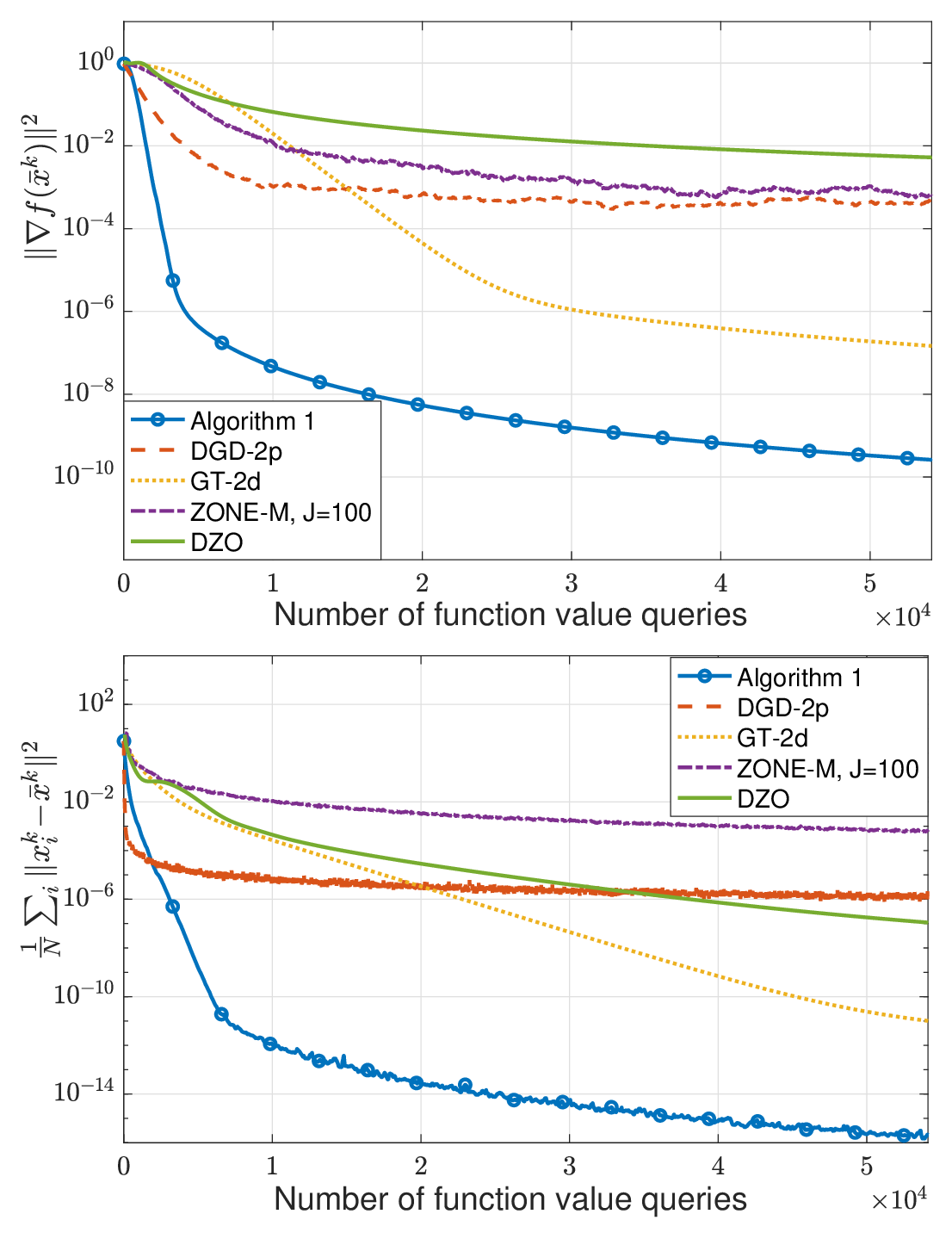}}
\caption{Convergence of Algorithm 1, ZONE-M with $J =100$, DGD-2p, GT-$2d$.}
\label{comparison_VRGE}
\end{figure}

\subsubsection{Comparison of Algorithm 1 with Different Probabilities}
Fig.~\ref{different_prob} compares the convergence of Algorithm 1 under different choices of the probability $p$, which reflects the frequency with which each agent takes snapshots. The three sub-figures illustrate the stationarity gap $\| \nabla f(\bar{x}^k)\|^2$, the consensus error $\frac{1}{N}\sum_{i}\| x_i^k - \bar{x}^k \|^2$, and the tracking error $\frac{1}{N}\sum_{i}\| s_i^k - \nabla f( \bar{x}^k )\|^2$, respectively. 

The results demonstrate that Algorithm 1 with a lower probability achieves better accuracy with fewer sampling numbers. However, a lower probability also results in more fluctuation during convergence. This is expected because, with a lower probability, the snapshot variables are updated less frequently, leading to a greater deviation from the true gradient as iterations progress.

Two notable cases are $p=0$ and $p=1$. When $p=1$, Algorithm 1 behaves the same as GT-$2d$, utilizing a $2d$-point gradient estimator at each step. This leads to inferior empirical convergence performance compared to when $p\in(0,1)$. On the other hand, with $p=0$, agents avoid using the $2d$-point estimation and opt to update only one random direction per iteration based on the initial gradient estimation. This approach leads to persistent variance and decreased convergence accuracy, as demonstrated in Fig.~\ref{different_prob}.

\begin{figure}[t]
\centerline{
\includegraphics[width=.65\columnwidth]{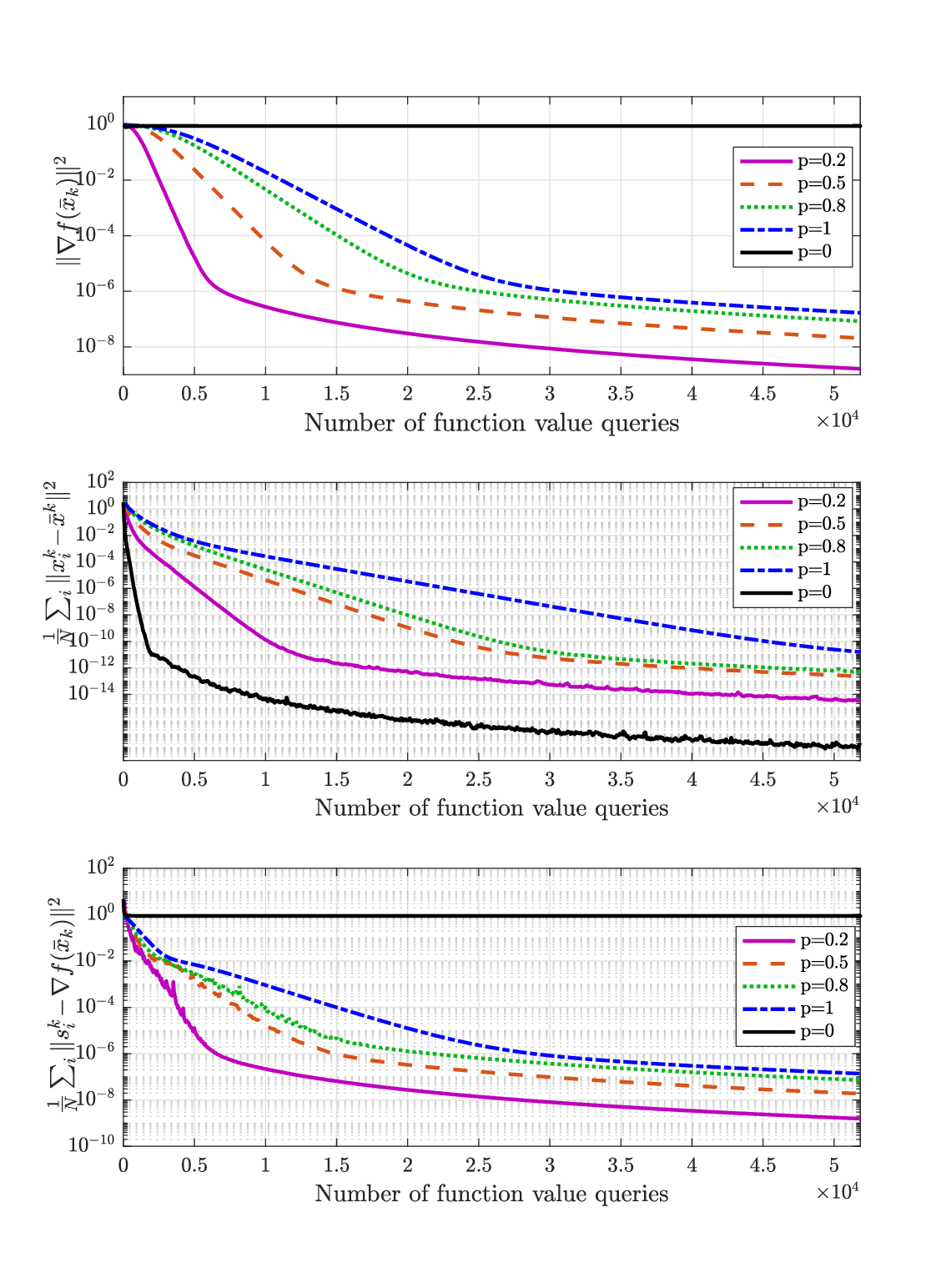}}
\vspace{-24pt}
\caption{Convergence of Algorithm 1 under probability $p$ = 0.2, 0.5, 0.8, and 1.}
\label{different_prob}
\end{figure}

\subsubsection{Comparison of Algorithm 1 under Different Dimensions}
Fig.~\ref{different_dim} compares the convergence of Algorithm 1 across different agent dimensions, alongside varying probabilities for taking snapshots within the algorithm. The results show that Algorithm 1 can effectively handle different scenarios, such as when $d=300$, achieving stationarity gaps that are below $10^{-6}$.

As the dimension increases, VR-GE requires more samples to accurately estimate the gradient. To maintain similar convergence performance across higher dimensions, the probability $p$ for taking snapshots can be adjusted to lower values. As shown in Fig.~\ref{different_dim}, decreasing the probability as the dimension grows allows Algorithm 1 to achieve a convergence rate and optimization accuracy that are comparable to cases with lower dimensions. However, this adjustment also leads to increased fluctuation during the convergence process. This fluctuation is a result of the randomness introduced by the snapshot mechanism.

\begin{figure}[!t]
\centerline{\includegraphics[width=.65\columnwidth]{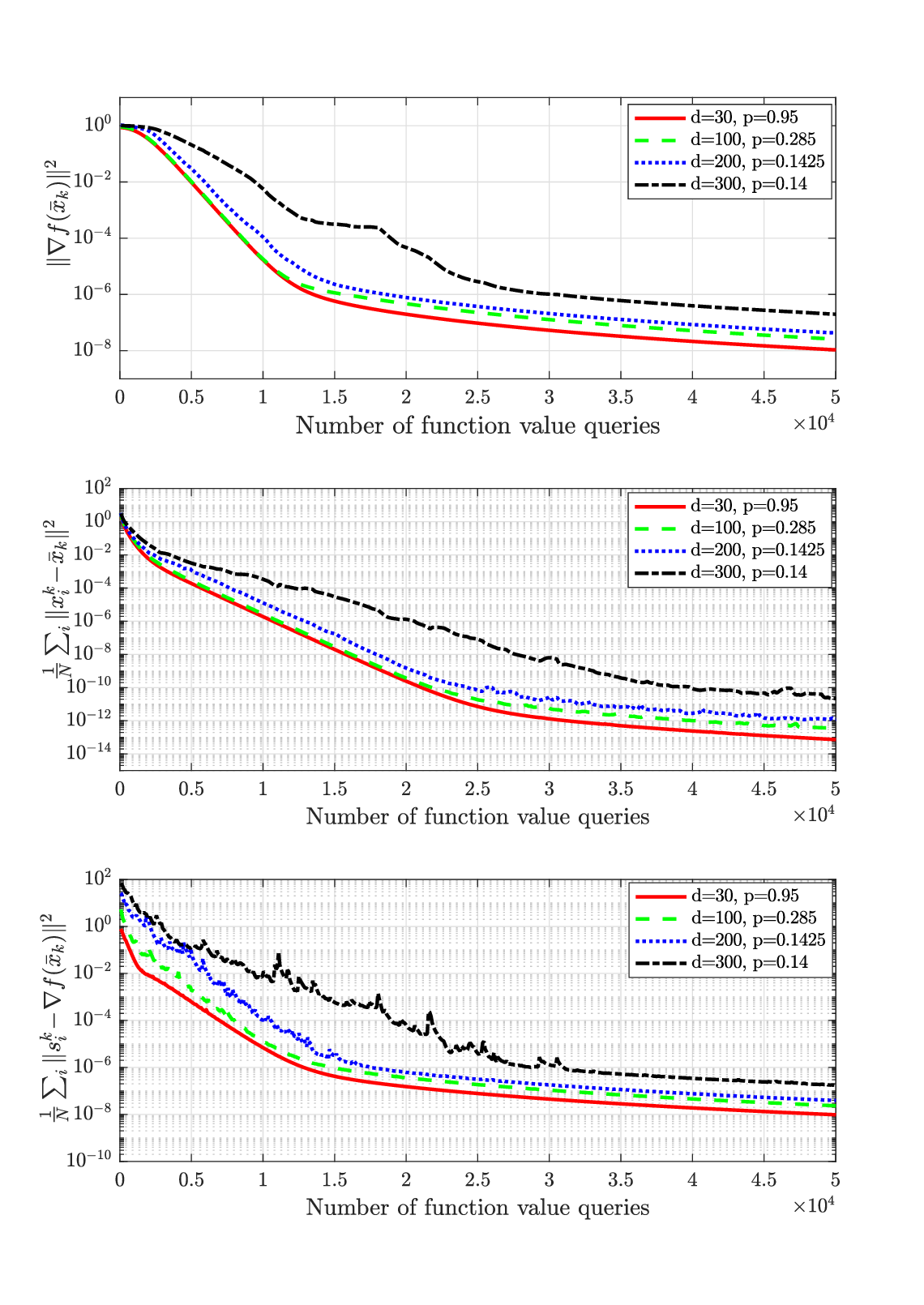}}
\vspace{-24pt}
\caption{Convergence of Algorithm 1 with different dimension $d$ = 30, 100, 200, and 300.}
\label{different_dim}
\end{figure}

\subsection{Simulation on a Test Case with Real World Data}
We consider an image classification problem employing the CIFAR-10 dataset~\cite{krizhevsky2009learning} to assess our algorithm's performance.
The setup involves $N=50$ parallel, independent agents interconnected via an undirected graph $\mathcal{G}$, with each agent handling a separate batch consisting of 200 samples. The associated weight matrix for graph $\mathcal{G}$ is generated by randomly sampling the nodes onto the unit sphere $\mathbb{S}^2$. An edge $(i,j)\in\mathcal{E}$ exists if the spherical distance between the corresponding agents is less than $\frac{3\pi}{4}$.  The doubly-stochastic mixing matrix $W$ is then constructed according to the Metropolis-Hastings rule~\cite{xiao2005scheme}. The objective function of each agent is a regularized version of the cross-entropy loss computed over its local dataset:
\begin{equation}
    \begin{aligned}
        F_i(\Theta) = \frac{1}{n_i}\sum_{k=1}^{n_i}l(\Theta;(x_k^{(i)}, y_k^{(i)})) + \frac{\lambda}{2} \ln (1 + \| \Theta \|_F^2),
    \end{aligned}
\end{equation}
where $\Theta\in\mathbb{R}^{q\times c} $ represents the global model parameter matrix to be optimized.
Here, the feature dimension is $q=65$ (64 CNN-extracted features plus 1 bias term) and the number of classes is $c=10$, yielding a total parameter dimension of $d = q\times c = 650$. Every node $i$ contains $n_i=200$ training samples, with $(x_k^{(i)}, y_k^{(i)})$ as the $k$-th feature vector and label at node $i$. The function $l(\cdot)$ is multi-class cross-entropy loss of the following  form:
\begin{equation}
l(\Theta; (x_k^{(i)}, y_k^{(i)})) = -\ln \left(\frac{\exp(\theta_{y_k^{(i)}}^T x_k^{(i)})}{\sum_{j=1}^{c} \exp(\theta_j^T x_k^{(i)})}\right).
\end{equation}
The regularization coefficient is set to $\lambda = 0.02$.

We set the probability parameter in Algorithm 1 to $p=2\times10^{-3}$. The stepsizes for Algorithm 1, DGD-2p, and GT-$2d$ are set to $\alpha = 3\times 10^{-4}$, $\eta = 1\times10^{-3}/\sqrt{k}$, and $\alpha = 5\times 10^{-3}$, respectively. For ZONE-M, we set $J=50$. For DZO, we set $\beta = 1\times 10^{-1}$, $\alpha = 1.5\times 10^{-1}$, and $\eta = 5\times 10^{-3}$.

We employ two metrics to evaluate the performance of algorithms: 
i) Squared gradient norm in Fig.~\ref{cifar}, defined as $ \| \frac{1}{N}\sum_{i=1}^{N}\nabla F_i(\bar{\Theta}) \|^2$, corresponds to the squared gradient norm of the global objective function evaluated on the entire training set that demonstrate the optimization convergence.
This metric reflects the convergence behavior of the optimization process, where $\bar{\Theta} = \frac{1}{N}\sum_{j=1}^{N} \Theta_j$ denotes the global average of the model parameters across all nodes. ii) Consensus error in Fig.~\ref{cifar_consensus}, defined as $\sum_{i=1}^N\| \Theta_i - \bar{\Theta} \|^2$, measures the total deviation of all node parameters from their global average and tracks the algorithm's progress toward consensus.

\begin{figure}[htb]
\centerline{\includegraphics[width=.6\columnwidth]{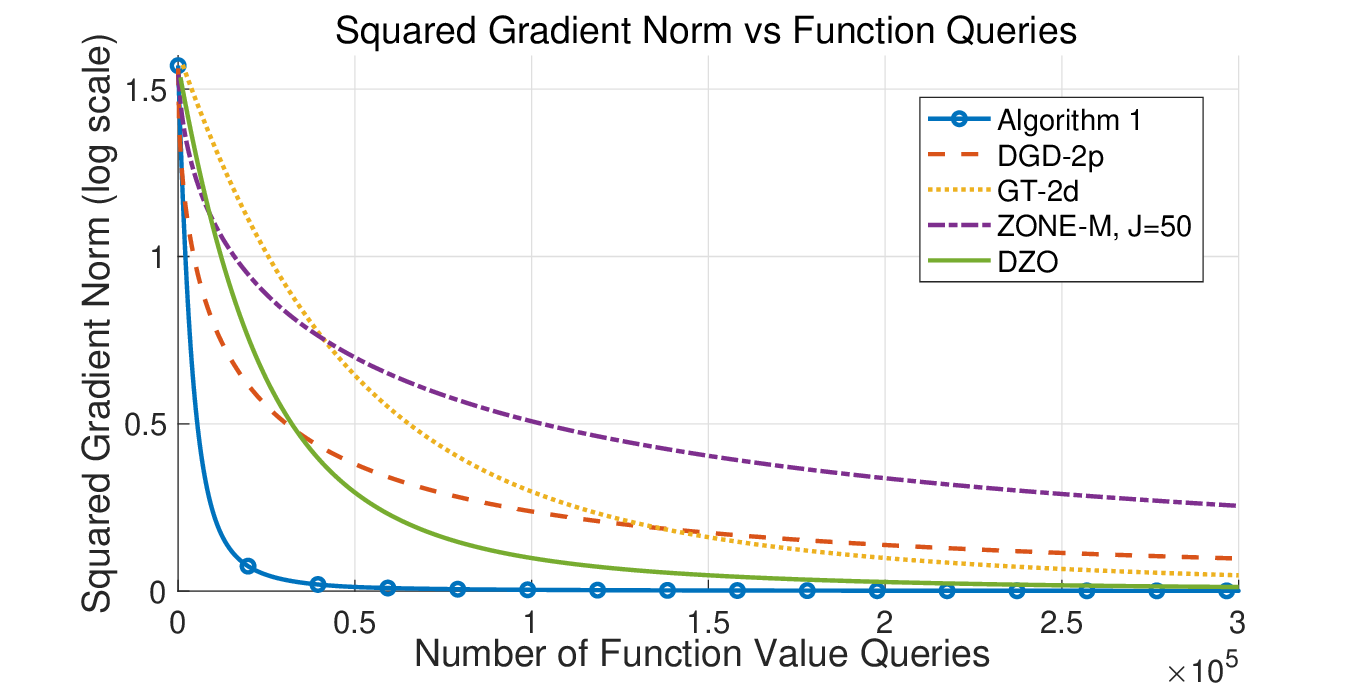}}
\caption{Squared gradient norm curves of Algorithm 1, ZONE-M, DGD-2p, GT-$2d$, and DZO on the CIFAR-10 dataset.}
\label{cifar}
\end{figure}

\begin{figure}[htb]
\centerline{\includegraphics[width=.6\columnwidth]{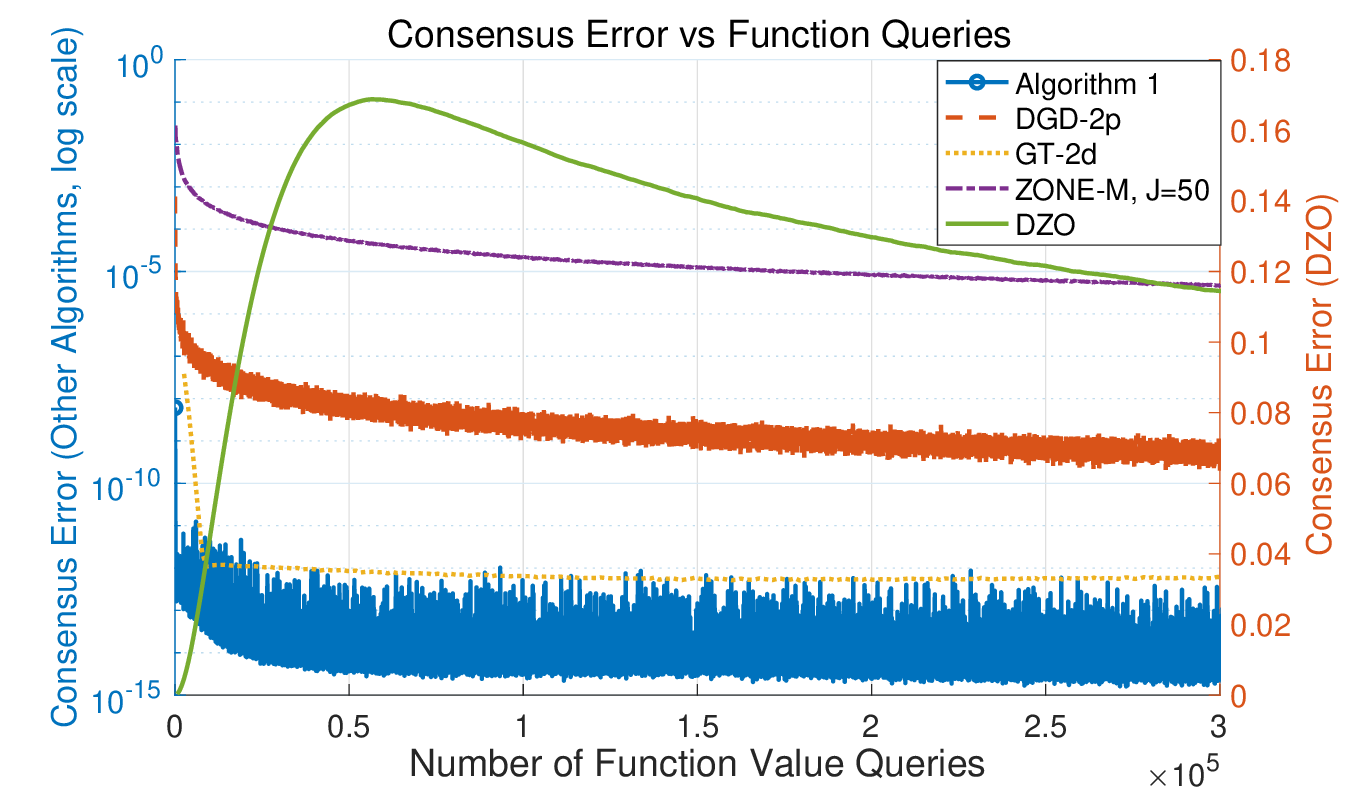}}
\caption{Consensus errors of Algorithm 1, ZONE-M, DGD-2p, GT-$2d$, and DZO on the CIFAR-10 dataset.}
\label{cifar_consensus}
\end{figure}

As shown in Fig.~\ref{cifar}, Algorithm 1 achieves a faster convergence rate than all other algorithms, and higher convergence accuracy than both DGD-2p and ZONE-M.

In Fig.~\ref{cifar_consensus}, we use dual-axis plot to show the consensus error. The right y-axis displays the error for the DZO algorithm on a linear scale, while the left y-axis, in logarithmic scale, corresponds to the consensus error for the other algorithms. While the DZO algorithm demonstrates a relatively fast convergence rate in Fig.~\ref{cifar}, its steady-state consensus error remains higher, around $10^{-1}$, compared to the other algorithms. In contrast, Algorithm 1 and GT-$2d$ achieve a much lower consensus error, converging to approximately $10^{-13}$. The DGD-2p and GT-$2d$ algorithms approach $10^{-9}$ and $10^{-5}$, respectively. The trajectories of DGD-2p and Algorithm 1 has more fluctuations. This behavior is due to the stochasticity present in their state update processes.

The code for the numerical simulations can be found at \url{https://github.com/HuaiyiMu/VR-GE}.

\section{Conclusion}
In this paper, we proposed an improved variance-reduced gradient estimator and integrated it with gradient tracking mechanism for nonconvex distributed zeroth-order optimization problems. Through rigorous analysis, we demonstrated that our algorithm achieves sublinear convergence for smooth nonconvex functions that is comparable with first-order gradient tracking algorithms, while maintaining relatively low sampling cost per gradient estimation. We also derive linear convergence rate for smooth nonconvex and gradient dominated objective functions. Comparative evaluations with existing distributed zeroth-order optimization algorithms verified the effectiveness of the proposed gradient estimator.

\appendix

\section{Auxiliary Lemmas}

This section summarizes some auxiliary lemmas for convergence analysis.

\begin{lemma}[\cite{nesterov2018lectures}] \label{LSmoothSquare}
    Suppose $f : \mathbb{R}^d \to \mathbb{R}$ is $L$-smooth. Then for any $x,y \in \mathbb{R}^d$, we have
    \begin{equation}
        \begin{aligned}
            f(y) \leq f(x) + \langle  \nabla f(x) , y-x \rangle + \frac{L}{2}\|y-x\|^2.
        \end{aligned}
    \end{equation}
    
\end{lemma}

\begin{lemma}[\cite{9199106}] \label{2d_error}
	Let $f : \mathbb{R}^d \to \mathbb{R}$ be $L$-smooth. Then for any $x\in \mathbb{R}^d$, 
	\begin{equation}
		\left\| G_f^{(2d)}(x,u) - \nabla f(x) \right\| \leq \frac{1}{2}uL\sqrt{d}.
	\end{equation}
\end{lemma}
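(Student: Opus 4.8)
The plan is to reduce the vector bound to $d$ scalar estimates, one per coordinate, and then invoke the standard quadratic model implied by $L$-smoothness. First I would record the elementary consequence of \eqref{L_smooth_assump} that for all $x,h\in\mathbb{R}^d$,
\[
\big| f(x+h) - f(x) - \langle \nabla f(x), h\rangle \big| \leq \tfrac{L}{2}\|h\|^2,
\]
which follows by writing $f(x+h)-f(x)-\langle\nabla f(x),h\rangle = \int_0^1 \langle \nabla f(x+th)-\nabla f(x),\,h\rangle\,dt$ and bounding the integrand by $Lt\|h\|^2$ via Cauchy--Schwarz and \eqref{L_smooth_assump}. Note this quadratic bound is not among the results quoted earlier, so I would prove it in this one line rather than cite it.

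Next I would isolate the $l$-th coordinate of the error vector. By the definition of $G_f^{(2d)}$ in \eqref{2d_point}, its $l$-th component is the centered difference quotient $\frac{f(x+ue_l)-f(x-ue_l)}{2u}$, so the $l$-th component of $G_f^{(2d)}(x,u)-\nabla f(x)$ equals $\frac{f(x+ue_l)-f(x-ue_l)}{2u} - [\nabla f(x)]_l$. Applying the quadratic bound with $h=ue_l$ and with $h=-ue_l$ gives two first-order remainders, each bounded in absolute value by $\tfrac{L}{2}u^2$; subtracting them cancels the common $f(x)$ term and the common $u[\nabla f(x)]_l$ contributions recombine, so that after dividing by $2u$ and using the triangle inequality I obtain
\[
\left| \frac{f(x+ue_l)-f(x-ue_l)}{2u} - [\nabla f(x)]_l \right| \leq \frac{1}{2u}\big(\tfrac{L}{2}u^2 + \tfrac{L}{2}u^2\big) = \frac{Lu}{2}.
\]

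Finally I would aggregate across coordinates. Since the error vector lies in $\mathbb{R}^d$ with each component bounded as above, $\|G_f^{(2d)}(x,u)-\nabla f(x)\|^2 = \sum_{l=1}^d \big(\tfrac{f(x+ue_l)-f(x-ue_l)}{2u} - [\nabla f(x)]_l\big)^2 \leq d\cdot\big(\tfrac{Lu}{2}\big)^2$, and taking square roots yields the claimed bound $\tfrac{1}{2}uL\sqrt{d}$. I do not expect any genuine obstacle here; the only point demanding a little care is the per-coordinate estimate, where using the \emph{centered} (rather than one-sided) difference is precisely what keeps the remainder at order $u^2$ before division by $2u$, so that the $\sqrt{d}$ factor emerges cleanly from summing $d$ squared coordinate errors each of size $O(u)$.
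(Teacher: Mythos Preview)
Your argument is correct and is the standard coordinate-wise derivation of this bound. Note, however, that the paper does not actually supply its own proof of this lemma: it is stated in Appendix~A as a cited result from~\cite{9199106} without any accompanying argument, so there is no in-paper proof to compare against. Your write-up would serve as a clean self-contained justification in place of that citation.
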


\begin{lemma}[\cite{qu2017harnessing}] \label{sigma_leq1}
	Let $\sigma \triangleq \| W - \frac{1}{N}\mathbf{1}_N\mathbf{1}_N^T \|_2<1$. For any $z_1, \cdots, z_N \in \mathbb{R}^d$, we have
	\[
	  \| (W \otimes I_d)(z - \mathbf{1}_N \otimes \bar{z}) \| \leq \sigma \| z - \mathbf{1}_N \otimes \bar{z} \|,
	\]
	where we denote $ z = \begin{bmatrix}
		z_1^T & \cdots & z_N^T
	\end{bmatrix}^T, \bar{z} = \frac{1}{N}\sum_{i=1}^N z_i $.
\end{lemma}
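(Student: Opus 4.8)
The plan is to reduce the claim to a single operator-norm estimate by exploiting the fact that, on the subspace orthogonal to the consensus direction, the mixing operator $W\otimes I_d$ acts exactly like $(W-\frac{1}{N}\mathbf{1}_N\mathbf{1}_N^T)\otimes I_d$, whose spectral norm is precisely $\sigma$. To set this up, write $P = \frac{1}{N}\mathbf{1}_N\mathbf{1}_N^T\in\mathbb{R}^{N\times N}$, the orthogonal projection onto the line spanned by $\mathbf{1}_N$, and put $y = x - \mathbf{1}_N\otimes\bar{x}$. First I would express the consensus average through $P$: since $\bar{x} = \frac{1}{N}(\mathbf{1}_N^T\otimes I_d)x$, we have $\mathbf{1}_N\otimes\bar{x} = (P\otimes I_d)x$, and therefore $y = \big((I_N-P)\otimes I_d\big)x$.

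Next I would record the two elementary facts the argument rests on. The first is that $y$ is annihilated by $P\otimes I_d$: the block-average of $y$ equals $\frac{1}{N}\sum_{i=1}^N(x_i-\bar{x})=0$, so $(P\otimes I_d)y = \mathbf{1}_N\otimes 0 = 0$ (equivalently, this follows from the idempotency identity $P(I_N-P)=0$). The second is the splitting $W\otimes I_d = \big((W-P)\otimes I_d\big) + (P\otimes I_d)$; applying both sides to $y$ and using that the last term kills $y$, I obtain $(W\otimes I_d)y = \big((W-P)\otimes I_d\big)y$.

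The conclusion then follows by submultiplicativity together with the Kronecker spectral-norm identity $\|(W-P)\otimes I_d\|_2 = \|W-P\|_2\,\|I_d\|_2 = \sigma$:
\[
\|(W\otimes I_d)y\| = \big\|\big((W-P)\otimes I_d\big)y\big\| \leq \|(W-P)\otimes I_d\|_2\,\|y\| = \sigma\,\|y\|,
\]
which is exactly the asserted bound once $y=x-\mathbf{1}_N\otimes\bar{x}$ is substituted back.

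There is no genuine obstacle here; the whole proof is a short linear-algebra computation, and the only points deserving care are the two facts just isolated: that $y$ lies in the range of $(I_N-P)\otimes I_d$ so that its $P\otimes I_d$ component vanishes, and that tensoring with $I_d$ leaves the spectral norm unchanged. It is worth emphasizing that neither symmetry nor double stochasticity of $W$ is used for the inequality itself — only the definition $\sigma=\|W-\frac{1}{N}\mathbf{1}_N\mathbf{1}_N^T\|_2$; the hypothesis $\sigma<1$ is precisely what upgrades this bound into a strict contraction of the consensus error, as it is used later in the paper.
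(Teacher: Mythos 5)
Your proof is correct. Note that the paper itself offers no proof of this lemma --- it is stated as a citation to the reference on gradient tracking (Lemma~\ref{sigma_leq1} is imported verbatim) --- so there is nothing internal to compare against; your argument is the standard one and is complete: writing $P=\frac{1}{N}\mathbf{1}_N\mathbf{1}_N^T$, observing $y=x-\mathbf{1}_N\otimes\bar{x}=\left((I_N-P)\otimes I_d\right)x$ is annihilated by $P\otimes I_d$ (idempotency of $P$), and invoking $\|(W-P)\otimes I_d\|_2=\|W-P\|_2=\sigma$ is exactly how this contraction estimate is established in the cited literature. Your closing observation is also accurate and worth keeping: the inequality uses only the definition of $\sigma$ and the zero-mean structure of $y$, not double stochasticity of $W$ --- stochasticity enters elsewhere (e.g., in deriving $x^{k+1}-\mathbf{1}_N\otimes\bar{x}^{k+1}=(W\otimes I_d)(\cdots)$ before the lemma is applied, and in guaranteeing $\sigma<1$), while $\sigma<1$ is what makes the bound a strict contraction in the consensus analysis.
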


In the following, we shall denote the $\sigma$-algebra generated by $(x^\tau,s^\tau,g^\tau)_{\tau=0}^k$ by $\mathcal{F}^k$. Note that $x^{k+1}$ is $\mathcal{F}^k$-measurable.

\section{Proof of Lemma \ref{g^*-nabla_f}}
\label{appendix:proof_lemma_g^*-nabla_f}

Following from $\zeta_i^{k+1} \sim \mathrm{Ber}(p) $, we derive that
\begin{equation} \label{gikk}
    \begin{aligned}
        & \mathbb{E}\!\left[ \left\| g_i^{k+1} - \nabla f_i(x_i^{k+1}) \right\|^2 \right] \\ 
        ={} & \mathbb{E}\!\left[\mathbb{E}\Econd*{\left\| g_i^{k+1} - \nabla f_i(x_i^{k+1}) \right\|^2}{\mathcal{F}^{k},l_i^{k+1}} \right] \\
        ={} &  (1\!-\!p)\mathbb{E}\Big[ \big\| g_i^k \!+\! G_{f_i}^{(c)}(x_i^{k+1}, u_i^{k+1}, l_i^{k+1} ) \!-\!G_{f_i}^{(c)}(x_i^{k}, u_i^{k}, l_i^{k+1} ) \\
        & \!-\!\nabla \! f_i(x_i^{k+1}) \big\|^2 \Big]
        \!+\! p \ext{1} \mathbb{E}\!\left[\big\| G_{f_i}^{(2d)}\!( x_i^{k+1}, u_i^{k+1} ) \!-\! \nabla\! f_i(x_i^{k+1}) \big\|^2\right] \\
        ={} & (1\!-\!p)\mathbb{E}\!\left[ \left\| g_i^k - \nabla f_i(x_i^{k}) \right\|^2 \right] + 2(1\!-\!p) \mathbb{E}\Big[\Big\langle g_i^k \!-\! \nabla f_i(x_i^{k}) , \\
        & \ \ \mathbb{E}\big[ G_{f_i}^{(c)}(x_i^{k+1}, u_i^{k+1}, l_i^{k+1} ) -G_{f_i}^{(c)}(x_i^{k}, u_i^{k}, l_i^{k+1})  \\
        & \qquad- \big( \nabla f_i(x_i^{k+1}) - \nabla f_i(x_i^{k}) \big)\big|\mathcal{F}^{k} \big] \Big\rangle\Big] \\
        &+ (1\!-\!p) \mathbb{E}\Big[ \big\|G_{f_i}^{(c)}(x_i^{k+1}, u_i^{k+1}, l_i^{k+1} ) -G_{f_i}^{(c)}(x_i^{k}, u_i^{k}, l_i^{k+1})  \\
        & \qquad\qquad-\! \big( \nabla f_i(x_i^{k+1}) \!-\! \nabla f_i(x_i^{k}) \big) \big\|^2 \Big] 
        +\frac{1}{4}pL^2d(u_i^{k+1})^2,
    \end{aligned}
\end{equation}
where we have used $\big\|G_h^{(2d)}(x,u) -\nabla h(x) \big\| \leq \frac{1}{2}u L\sqrt{d}$ for $h$ being $L$-smooth in the second equality.

We start from the second term on the RHS of~\eqref{gikk} and get
\begin{equation} \label{cross_term}
    \begin{aligned}
        & \Big\langle g_i^k - \nabla f_i(x_i^{k}) , \mathbb{E}\big[G_{f_i}^{(c)}(x_i^{k+1}, u_i^{k+1}, l_i^{k+1} ) \\
        & -G_{f_i}^{(c)}(x_i^{k},\! u_i^{k},\! l_i^{k+1})  - \big( \nabla f_i(x_i^{k+1}) \!-\! \nabla f_i(x_i^{k}) \big) \big|\mathcal{F}^{k}\big] \Big\rangle \\
        \leq{} & \big\| g_i^k - \nabla f_i(x_i^{k}) \big\| \!\cdot\! \Big\| \mathbb{E}\big[ G_{f_i}^{(c)}(x_i^{k+1}, u_i^{k+1}, l_i^{k+1} ) \\
        & -G_{f_i}^{(c)}(x_i^{k}, u_i^{k}, l_i^{k+1}) - \big( \nabla f_i(x_i^{k+1}) - \nabla f_i(x_i^{k}) \big)\big|\mathcal{F}^k \big] \Big\| \\
        ={} & \big\| g_i^k - \nabla f_i(x_i^{k}) \big\| \!\cdot\! \big\| \big( G_{f_i}^{(2d)}(x_i^{k+1}, u_i^{k+1}) - \nabla f_i(x_i^{k+1}) \big) \\
        & - \big( G_{f_i}^{(2d)}(x_i^k, u_i^k) - \nabla f_i(x_i^{k}) \big) \big\| \\
        \leq{} & \big\| g_i^k - \nabla f_i(x_i^{k}) \big\| \!\cdot\! \Big( \big\| G_{f_i}^{(2d)}(x_i^{k+1}, u_i^{k+1}) - \nabla f_i(x_i^{k+1}) \big\| \\
        & + \big\| G_{f_i}^{(2d)}(x_i^k, u_i^k) - \nabla f_i(x_i^{k}) \big\| \Big) \\
        \leq{} &  \big\| g_i^k - \nabla f_i(x_i^{k}) \big\| \!\cdot\! \left( \frac{1}{2}u_i^{k+1}L\sqrt{d} + \frac{1}{2}u_i^{k}L\sqrt{d} \right) \\
        \leq{} & \big\| g_i^k - \nabla f_i(x_i^{k}) \big\| \cdot u_i^{k}L\sqrt{d} \\
        \leq{} & \frac{p}{4} \big\| g_i^k - \nabla f_i(x_i^{k}) \big\|^2 + \frac{L^2d}{p}(u_i^{k})^2.
    \end{aligned}
\end{equation}
Here the Cauchy--Schwarz inequality $ | \langle u,v \rangle | \leq \|u\|\|v\| $ is applied in the first step; $ \mathbb{E}_{l\sim\mathcal{U}[d]}\big[G_h^{(c)}(x,u,l)\big] = G_h^{(2d)}(x,u) $ is used in the second step; the triangle inequality is employed in the third step; the fifth step follows from the condition that the sequence $(u_i^k)_k$ is non-increasing. Finally, the AM--GM inequality $2\sqrt{ab} \leq a + b$ is used in the last step.

For the third term on the RHS of~\eqref{gikk}, we note that 
\begin{equation} \label{gikkp}
    \begin{aligned}
        & \mathbb{E}\Big[ \big\| G_{f_i}^{(c)}(x_i^{k+1}, u_i^{k+1}, l_i^{k+1} ) -G_{f_i}^{(c)}(x_i^{k}, u_i^{k}, l_i^{k+1})  \\
        & \quad - \big( \nabla f_i(x_i^{k+1}) - \nabla f_i(x_i^{k}) \big) \big\|^2 \Big|\mathcal{F}^k\Big] \\ 
        \leq{} & 2\ext{2}\mathbb{E}\Big[\big\|G_{f_i}^{(c)}(x_i^{k+1}, u_i^{k+1}, l_i^{k+1} ) -G_{f_i}^{(c)}(x_i^{k}, u_i^{k}, l_i^{k+1}) \big\|^2\Big|\mathcal{F}^k\Big] \\
        & + 2\ext{2}\mathbb{E}\Big[\big\| \nabla f_i(x_i^{k+1}) - \nabla f_i(x_i^{k}) \big\|^2\Big|\mathcal{F}^k\Big] \\
        ={} & 2d\big\| G_{f_i}^{(2d)}(x_i^{k+1}, u_i^{k+1}) - G_{f_i}^{(2d)}(x_i^k, u_i^k) \big\|^2 \\
        & + 2\big\| \nabla f_i(x_i^{k+1}) - \nabla f_i(x_i^{k}) \big\|^2 \\ 
        \leq{} & 2d \big\| G_{f_i}^{(2d)}(x_i^{k+1}, u_i^{k+1}) - G_{f_i}^{(2d)}(x_i^k, u_i^k) \big\|^2 \\
        & + 2L^2 \| x_i^{k+1} \!- x_i^k \|^2,
    \end{aligned}
\end{equation}
where we have used $\|u-v\|^2 \leq 2\|u\|^2 + 2\|v\|^2 $ in the first step, and $L$-smoothness of $f_i$ in the last step.

For the first term on the RHS of~\eqref{gikkp}, we have 
\begin{equation} \label{2dkkk}
    \begin{aligned}
        & \big\| G_{f_i}^{(2d)}(x_i^{k+1}, u_i^{k+1}) - G_{f_i}^{(2d)}(x_i^k, u_i^k) \big\|^2 \\
        ={} & \big\| \big( G_{f_i}^{(2d)}(x_i^{k+1}, u_i^{k+1}) - \nabla f_i(x_i^{k+1}) \big) - \big( G_{f_i}^{(2d)}(x_i^k, u_i^k) \\
        & \ \ - \nabla f_i(x_i^k) \big) + \big( \nabla f_i(x_i^{k+1}) - \nabla f_i(x_i^k) \big) \big\|^2 \\
        \leq{} & (u_i^{k+1})^2L^2d + (u_i^k)^2L^2d + 2\| \nabla f_i(x_i^{k+1}) - \nabla f_i(x_i^k) \|^2 \\
        \leq{} & 2L^2d(u_i^k)^2 + 2L^2 \| x_i^{k+1} - x_i^k \|^2,
    \end{aligned}
\end{equation}
where we have used $\|a+b+c\|^2 \leq 4\|a\|^2 + 4\|b\|^2 + 2\|c\|^2 $ in the second step, and the last step follows from the monotonicity of the sequence $(u_i^k)_k$ and $L$-smoothness of $f_i$.

Combining the inequalities~\eqref{2dkkk}, \eqref{gikkp} and \eqref{cross_term}, taking the total expectation, and plugging the outcomes into~\eqref{gikk}, we get
\[
\begin{aligned}
& \mathbb{E}\!\left[ \left\| g_i^{k+1} - \nabla f_i(x_i^{k+1}) \right\|^2 \right] \\
\leq{} & (1-p)\!\left(1+\frac{p}{2}\right)\mathbb{E}\!\left[ \left\| g_i^{k} - \nabla f_i(x_i^{k}) \right\|^2 \right] \\
& + (4d+2)(1-p)L^2\,\mathbb{E}\!\left[\left\| x_i^{k+1} - x_i^k \right\|^2\right] \\
&+\left(4d^2(1-p)+\frac{2(1-p)d}{p}
+\frac{pd}{4}\right)(Lu_i^k)^2 \\
\leq{} &
(1-p)\!\left(1+\frac{p}{2}\right)\mathbb{E}\!\left[ \left\| g_i^{k} - \nabla f_i(x_i^{k}) \right\|^2 \right] \\
& + 6d(1-p)L^2\,\mathbb{E}\!\left[\left\| x_i^{k+1} - x_i^k \right\|^2\right] +C_u(Lu_i^k)^2
\end{aligned},
\]
which completes the proof.

\section{Proof of Lemma \ref{error_optimization}}
\label{appendix:proof_lemma_error_optimization}

First, by left multiplying $\frac{1}{N}\mathbf{1}_N^T \otimes I_d $ on both sides of~\eqref{algorithm_compact_2}, and using the double stochasticity of $W$ and the initialization $s^0 = g^0$, we obtain
	\begin{equation*}
	\bar{s}^k = \bar{g}^k,		
	\end{equation*}
where $\bar{s}^k = \frac{1}{N}\sum_{i=1}^N s_i^k $ and $ \bar{g}^k = \frac{1}{N}\sum_{i=1}^N g_i^k $.

Then, from~\eqref{algorithm_compact_1}, we get
\begin{equation} \label{xbar_kplus1}
	\bar{x}^{k+1}=\bar{x}^k - \alpha \bar{g}^k. 
\end{equation}
Leveraging the $L$-smoothness of the function $f$, we have 
\begin{align} 
        f(\bar{x}^{k+1})-f(\bar{x}^k)
        \leq{} & \langle \nabla f(\bar{x}^k), \bar{x}^{k+1} - \bar{x}^k  \rangle + \frac{L}{2}\| \bar{x}^{k+1} - \bar{x}^k \|^2 \nonumber\\
        ={} & - \alpha \langle \nabla f(\bar{x}^k) - \bar{g}^k,  \bar{g}^k  \rangle - \left( \frac{1}{\alpha} - \frac{L}{2} \right)\| \bar{x}^{k+1} - \bar{x}^k \|^2 \label{fBarOri}
\end{align}
where we have used Lemma~\ref{LSmoothSquare} in the first step, and~\eqref{xbar_kplus1} in the second step, 

For the first term in~\eqref{fBarOri}, it is not hard to verify that 
\begin{equation} \label{fbarMiddle}
    \begin{aligned}
        2\langle \nabla f(\bar{x}^k) \!-\! \bar{g}^k,  \bar{g}^k  \rangle
        ={} & \|  \nabla f(\bar{x}^k) \|^2 - \| \nabla f(\bar{x}^k) \!-\! \bar{g}^k \|^2 \\
        & - \frac{1}{\alpha^2}\| \bar{x}^{k+1} \!-\! \bar{x}^k \|^2.
    \end{aligned}
\end{equation}
Plugging~\eqref{fbarMiddle} into the inequality~\eqref{fBarOri} and taking expectations on both sides, we get
\begin{equation} \label{fbarMiddd}
    \begin{aligned}
        \mathbb{E}\big[f(\bar{x}^{k+1})-f(\bar{x}^k)\big]
        \leq{} & - \frac{\alpha}{2} \mathbb{E} \big[\|  \nabla f(\bar{x}^k) \|^2\big] \!-\! \left( \frac{1}{2\alpha} \!-\! \frac{L}{2} \right)\!\mathbb{E}\big[\| \bar{x}^{k+1} \!-\! \bar{x}^k \|^2\big] \\
        &  + \frac{\alpha}{2}\mathbb{E}\big[\| \nabla f(\bar{x}^k) - \bar{g}^k \|^2\big]
    \end{aligned}
\end{equation}
For the last term on the RHS of~\eqref{fbarMiddd}, have
\begin{equation*}
    \begin{aligned}
        \mathbb{E}\big[\| \nabla f(\bar{x}^k) - \bar{g}^k \|^2\big]
        ={} & \mathbb{E}\!\left[
        \Big\|\mfrac{1}{N}\sum\nolimits_{i=1}^N \big(\nabla f_i(\bar{x}^k)-g_i^k\big)\Big\|^2\right]\\
        \leq{} &
        \frac{1}{N}\sum\nolimits_{i=1}^N
        \mathbb{E}\!\left[
        \left\|\left( \nabla\ext{-2} f_i (x_i^k) \!-\! g_i^k \right)+\left( \nabla\ext{-2} f_i (\bar{x}^k) \!-\! \nabla\ext{-2} f_i (x_i^k) \right)\right\|^2\right] \\
        \leq{} &  \frac{2}{N}\sum\nolimits_{i=1}^N\mathbb{E}\!\left[ \left\| \nabla f_i (x_i^k) - g_i^k \right\|^2+ \left\| \nabla f_i (\bar{x}^k) -\nabla f_i (x_i^k) \right\|^2 \right] \\
        \leq{} & \frac{2}{N}E_g^k + \frac{2L^2}{N}E_x^k,
    \end{aligned}
\end{equation*}
which also proves~\eqref{fbarTemp}. Combining~it with~\eqref{fbarMiddd}, we complete the proof.

\section{Proof of Lemma~\ref{LMI_lemma}}
\label{appendix:proof_lemma_LMI_lemma}

First, following from \eqref{algorithm_compact} and \eqref{xbar_kplus1}, we derive that 
\begin{equation} \label{E_x^k+1}
	\begin{aligned}
		E_x^{k+1}  ={} &  \mathbb{E}\!\left[ \left\| x^{k+1} - \mathbf{1}_N \otimes \bar{x}^{k+1} \right\|^2 \right] \\
		={} &  \mathbb{E}\!\left[ \left\| (W \otimes I_d)\big[x^k \!-\! \mathbf{1}_N \otimes \bar{x}^k \!-\! \alpha (s^k \!-\! \mathbf{1}_N \otimes \bar{g}^k) \big] \right\|^2 \right] \\
        \leq {} &\sigma^2\,
        \mathbb{E}\!\left[ \left\| x^k \!-\! \mathbf{1}_N \otimes \bar{x}^k \!-\! \alpha (s^k \!-\! \mathbf{1}_N \otimes \bar{g}^k) \right\|^2 \right] \\
		\leq{} & \sigma^2\left(1+\frac{1-\sigma^2}{3\sigma^2}\right) \mathbb{E} \!\left[ \left\| x^k - \mathbf{1}_N\otimes \bar{x}^k \right\|^2 \right] \\
		& + \sigma^2\left(1+\frac{3\sigma^2}{1-\sigma^2}\right)\alpha^2 \mathbb{E} \!\left[ \left\| s^k - \mathbf{1}_N\otimes \bar{g}^k \right\|^2 \right] \\
		\leq{} & \frac{1+2\sigma^2}{3}E_x^k + \frac{3}{1-\sigma^2}\alpha^2 E_s^k,
	\end{aligned}
\end{equation}
where we have used Lemma~\ref{sigma_leq1} in the first inequality, and $(a+b)^2 \leq (1+\varpi)a^2 + (1+\frac{1}{\varpi})b^2 $ for any $a,b\in\mathbb{R}$ and $\varpi > 0$ in the second inequality.

Second, we bound the tracking error $E_g^k$. By summing over $i\in[N]$ on both sides of~\eqref{variance} and noting that
\[
E_g^k=\sum\nolimits_{i}\mathbb{E}\!\left[ \| g_i^k - \nabla f_i(x_i^k) \|^2 \right],
\]
we can get
\begin{equation} \label{EgMidd}
    \begin{aligned}
        E_g^{k+1} \leq{} &  (1-p)\!\left(1+\frac{p}{2}\right)E_g^k + L^2C_u\sum\nolimits_i(u_i^k)^2 \\
        &+ 6d(1-p)L^2\,\mathbb{E}\!\left[\left\| x^{k+1} - x^k \right\|^2\right].
    \end{aligned}
\end{equation}
To bound the third term on the RHS of~\eqref{EgMidd}, we note that
\begin{equation} \label{xkkxkMid}
    \begin{aligned}
          \mathbb{E}\!\left[\left\| x^{k+1} - x^k \right\|^2\right]
          ={} &  \mathbb{E}\Big[\big\| (x^{k+1} - \mathbf{1}_N \otimes\bar{x}^{k+1}) - (x^k - \mathbf{1}_N \otimes\bar{x}^k) \\
          & + (\mathbf{1}_N \otimes\bar{x}^{k+1} - \mathbf{1}_N \otimes\bar{x}^k) \big\|^2\Big] \\
         \leq{} & 2E_x^{k+1} + 4E_x^k + 4N\ext{2}\mathbb{E}\!\left[\left\| \bar{x}^{k+1} - \bar{x}^k \right\|^2\right].
    \end{aligned}
\end{equation}
Here we bound $E_x^{k+1}$ differently as follows:
\begin{equation} \label{SimpEx}
    \begin{aligned}
        E_x^{k+1}
		={} &  \mathbb{E} \!\left[ \left\| (W \otimes I_d)\big[x^k \!-\! \mathbf{1}_N \otimes \bar{x}^k \!-\! \alpha (s^k \!-\! \mathbf{1}_N \otimes \bar{g}^k) \big] \right\|^2 \right] \\
        \leq{} & \sigma^2\,
        \mathbb{E}\!\left[ \left\| x^k \!-\! \mathbf{1}_N \otimes \bar{x}^k \!-\! \alpha (s^k \!-\! \mathbf{1}_N \otimes \bar{g}^k) \right\|^2 \right] \\
        \leq{} & \sigma^2\, \mathbb{E}\!\left[ 2\!\left(\left\|x^k \!-\! \mathbf{1}_N \otimes \bar{x}^k \right\|^2 \!+\! \left\| \alpha (s^k \!-\! \mathbf{1}_N \otimes \bar{g}^k) \right\|^2 \right)\right] \\
        \leq{} & 2E_x^k + 2\alpha^2 E_s^k.
    \end{aligned}
\end{equation}
Plugging~\eqref{SimpEx} into the inequality~\eqref{xkkxkMid}, we derive that 
\begin{equation} \label{xkkxkMidddd}
    \begin{aligned}
        \mathbb{E}\!\left[\left\| x^{k+1} \!-\! x^k \right\|^2\right] \leq 8E_x^k + 4\alpha^2 E_s^k + 4N \ext{2}\mathbb{E}\!\left[\left\| \bar{x}^{k+1} \!-\! \bar{x}^k \right\|^2\right]\!.
    \end{aligned}
\end{equation}
Now, we can combine~\eqref{xkkxkMidddd} and~\eqref{EgMidd} and get the desired bound on $E_g^{k+1}$ in Lemma~\ref{LMI_lemma}. 

Third, we bound the tracking error $E_s^k$. Note that
\begin{equation*}
	\begin{aligned}
		E_s^{k+1} ={} &  \mathbb{E} \!\left[ \left\| s^{k+1} - \mathbf{1}_N \otimes \bar{g}^{k+1} \right\|^2 \right] \\
		={} & \mathbb{E} \Big[  \big\| (W\otimes I_d) ( s^k - \mathbf{1}_N \otimes \bar{g}^k + g^{k+1} - g^k \\
		& \quad - \mathbf{1}_N \otimes \bar{g}^{k+1} + \mathbf{1}_N \otimes \bar{g}^k ) \big\|^2 \Big] .
	\end{aligned}
\end{equation*}
Since $\bar{s}^k=\bar{g}^k$, we may apply Lemma~\ref{sigma_leq1} to obtain
\begin{equation}\label{six_items}
\begin{aligned}
E_s^{k+1}\leq{} &
\sigma^2\,\mathbb{E} \Big[  \big\| s^k - \mathbf{1}_N \otimes \bar{g}^k + g^{k+1} - g^k \\
		& \quad - \mathbf{1}_N \otimes \bar{g}^{k+1} + \mathbf{1}_N \otimes \bar{g}^k \big\|^2 \Big] \\
\leq{} & \sigma^2
\mathbb{E} \Big[\big(  \big\| s^k - \mathbf{1}_N \otimes \bar{g}^k\big\| + \big\|g^{k+1} - g^k \\
		& \quad - \mathbf{1}_N \otimes (\bar{g}^{k+1} - \bar{g}^k) \big\|\big)^2 \Big].
\end{aligned}
\end{equation}
To bound the term $\big\|g^{k+1} - g^k - \mathbf{1}_N \otimes (\bar{g}^{k+1}-\bar{g}^k)\big\|$, note that
\begin{align}
		&   \| g^{k+1} - g^k - \mathbf{1}_N \otimes (\bar{g}^{k+1} - \bar{g}^k) \|^2  \nonumber\\*
		={} & \| g^{k+1} - g^k \|^2 + N\| \bar{g}^{k+1} - \bar{g}^k \|^2 \nonumber\\
        & - 2\sum\nolimits_{i=1}^N \langle g_i^{k+1} -  g_i^k, \bar{g}^{k+1} - \bar{g}^k \rangle \nonumber\\
		={} & \| g^{k+1} - g^k \|^2 - N\| \bar{g}^{k+1} - \bar{g}^k \|^2 \nonumber \\
		\leq{} & \| g^{k+1} - g^k \|^2.
  \label{g_k+1_k}
\end{align}
Combining~\eqref{g_k+1_k} with~\eqref{six_items}, we derive that
\begin{align}
		E_s^{k+1}
		\leq{} & \sigma^2 \,\mathbb{E}\!\left[  \left( \left\| s^k - \mathbf{1}_N \otimes \bar{g}^k \right\| +  \left\| g^{k+1} - g^k \right\| \right)^{\!2} \right]   \nonumber\\
        \leq{} &
        \sigma^2\left(1\!+\!\frac{1\!-\!\sigma^2}{3\sigma^2}\right)E_s^k
        +\sigma^2\left(1\!+\!\frac{3\sigma^2}{1\!-\!\sigma^2}\right)\mathbb{E}\!\left[ \left\| g^{k+1} - g^k \right\|^2 \right] \nonumber\\
		\leq{} & \frac{1+2\sigma^2}{3}E_s^k + \frac{3}{1-\sigma^2} \sum\nolimits_{i} \mathbb{E} \left[  \left\| g_i^{k+1} - g_i^k \right\|^2 \right]. \label{two_items}
\end{align}
Now we consider the second item in \eqref{two_items}:
\begin{equation} \label{g_k+1-g_k}
	\begin{aligned}
		\sum\nolimits_{i} \mathbb{E}\!\left[ \left\| g_i^{k+1} - g_i^k \right\|^2 \right]
		={} &  \mathbb{E} \Big[ \sum\nolimits_{i}\big\| \big(g_i^{k+1} - \nabla f_i(x_i^{k+1})\big) - \big(g_i^k - \nabla f_i(x_i^k)\big) \\
		& + \big(\nabla f_i(x_i^{k+1}) - \nabla f_i(x_i^k)\big) \big\|^2 \Big] \\
		\leq{} & 2E_g^{k+1} + 4E_g^k + 4L^2\,\mathbb{E} \!\left[ \left\| x^{k+1} - x^k \right\|^2 \right] , \\
        \leq{} &  6E_g^k + ( 12d(1-p) + 4 )L^2 \mathbb{E} [ \| x^{k+1} - x^k \|^2 ]  \\
        & + 2L^2C_u\sum\nolimits_i(u_i^k)^2,
	\end{aligned}
\end{equation}
where we have used Assumption~\ref{assumption_smooth_f^*} in the first inequality, and~\eqref{EgMidd} with $(1-p)(1+p/2)<1$ in the second inequality.

Plugging~\eqref{g_k+1-g_k} into~\eqref{two_items}, we will obtain
\begin{equation}
    \begin{aligned}
        E_s^{k+1} \leq{} & \frac{1\!+\!2\sigma^2}{3}E_s^k + \frac{12( 3d(1\!-\!p) + 1 )L^2}{1-\sigma^2} \mathbb{E}\!\left[\left\| x^{k+1} \!-\! x^k \right\|^2\right] \\
        &  + \frac{18}{1-\sigma^2}E_g^k + \frac{6L^2C_u\sum_i (u_i^k)^2}{1-\sigma^2}.
    \end{aligned}
\end{equation}
Using the inequality~\eqref{xkkxkMidddd}, we further derive that
\begin{equation} \label{boundbeforeEs}
    \begin{aligned}
        E_s^{k+1} \leq{} & \!\left[\frac{1\!+\!2\sigma^2}{3} \!+\! \frac{48( 3d(1\!-\!p) + 1 )\alpha^2L^2}{1\!-\!\sigma^2} \right]\!E_s^k \!+\! \frac{18}{1 \!-\! \sigma^2}E_g^k \\
        & + \frac{96( 3d(1-p) + 1 )L^2}{1-\sigma^2}E_x^k + \frac{6L^2C_u\sum_i (u_i^k)^2}{1-\sigma^2} \\
        & + \frac{48( 3d(1-p) + 1 )NL^2}{1-\sigma^2}\mathbb{E}\!\left[ \left\| \bar{x}^{k+1} - \bar{x}^k \right\|^2\right].
    \end{aligned}
\end{equation}

Using $\alpha L = c\sqrt{\frac{p}{d(1-p) + 1}}$ and $c\leq \frac{(1-\sigma^2)^2}{28^2} $, we derive that 
\begin{equation} \label{boundEs}
    \frac{48( 3d(1\!-\!p) + 1 )\alpha^2L^2}{1\!-\!\sigma^2} < \frac{1-\sigma^2}{3}.
\end{equation}
Combining~\eqref{boundEs} with~\eqref{boundbeforeEs}, we complete the proof.

\section{Proof of Lemma \ref{Sigma_inequality}}
\label{appendix:proof_lemma_Sigma_inequality}

Accurately determining and bounding the spectral radius or the spectral norm of the matrix $A$ is challenging. By introducing the auxiliary variable $E_c^k$, we can reduce the dimensionality of the system matrix $A$ from $\mathbb{R}^{3\times3}$ to $\mathbb{R}^{2\times2}$, making it more straightforward to analyze.

We first derive a bound for the variable $E_c^k$. By the definition of $E_c^k$, we see that
\begin{equation*} 
    \begin{aligned}
        E_c^{k+1}={} & E_x^{k+1} + \frac{18\alpha^2}{(1-\sigma^2)^2} E_s^{k+1} \\
        \leq{} & \left( \frac{1\!+\!2\sigma^2}{3} \!+\!  \frac{1728(3d(1\!-\!p)\!+\!1)\alpha^2L^2}{(1\!-\!\sigma^2)^3} \right)\!E_x^k  \!+\! \frac{324\alpha^2}{(1\!-\!\sigma^2)^3} E_g^k \\
        & + \!\left( \frac{1\!-\!\sigma^2}{6} \!+\! \frac{2\!+\!\sigma^2}{3} \right) \frac{18\alpha^2}{(1\!-\!\sigma^2)^2} E_s^k \!+\! \frac{108\alpha^2L^2C_u\sum\nolimits_i(u_i^k)^2}{(1-\sigma^2)^3} \\
        & + \frac{864( 3d(1-p) + 1 ) N\alpha^2L^2}{(1-\sigma^2)^3} \mathbb{E}\!\left[\left\| \bar{x}^{k+1} - \bar{x}^k \right\|^2\right],
    \end{aligned}
\end{equation*}
where we have used~\eqref{linear_matrix} in the inequality. Using $\alpha L = c\sqrt{\frac{p}{d(1-p) + 1}}$ and $c\leq \big(\frac{1-\sigma^2}{28}\big)^2 $, we derive that 
\[
\frac{1728( 3d(1\!-\!p) \!+\! 1 )\alpha^2L^2}{(1\!-\!\sigma^2)^3} < \frac{1-\sigma^2}{2}.
\]
Consequently, we have
\begin{equation}  \label{EcFinal}
    \begin{aligned}
        E_c^{k+1} \leq{} & \frac{5+\sigma^2}{6}E_c^k + \frac{324\alpha^2}{(1-\sigma^2)^3} E_g^k + \frac{108\alpha^2L^2C_u\sum_i (u_i^k)^2}{(1-\sigma^2)^3} \\
        & + \frac{864(3d(1-p) + 1) N\alpha^2 L^2}{(1-\sigma^2)^3} \mathbb{E}\!\left[ \left\| \bar{x}^{k+1} - \bar{x}^k \right\|^2\right].
    \end{aligned}
\end{equation}

We then derive a bound for the tracking error $E_g^k$ as follows:
\begin{equation} \label{EgEc}
    \begin{aligned}
        E_g^{k+1} \!\leq{} &  (1 - p)\!\left(1+\frac{p}{2}\right)E_g^k + 48 d(1\!-\!p)L^2E_x^k \\
        &+ \frac{4 d(1\!-\!p)L^2(1\!-\!\sigma^2)^2}{3}\frac{18\alpha^2}{(1\!-\!\sigma^2)^2} E_s^k \\
        & + 24 Nd(1\!-\!p)L^2\mathbb{E}\!\left[ \| \bar{x}^{k+1} \!-\! \bar{x}^k \|^2 \right] \!+\! L^2C_u\!\sum\nolimits_i\!(u_i^k)^2       \\ 
        \leq{} &  \!\!\left(1\!-\!\frac{p}{2}\right)\!E_g^k \!+\! 16(3d(1\!-\!p)\!+\!1)L^2E_c^k\!+\! L^2C_u\!\sum\nolimits_i\!(u_i^k)^2 \\
        &+ 8 N(3d(1\!-\!p)+1)L^2\mathbb{E}\!\left[ \| \bar{x}^{k+1} \!-\! \bar{x}^k \|^2 \right],
    \end{aligned}
\end{equation}
where we have used~\eqref{linear_matrix} in the first inequality, and the definition of $E_c^k$ as well as $(1-p)(1+p/2)\leq 1-p/2$ in the second inequality.

Now we are able to reformulate the inequality~\eqref{linear_matrix}. By combining inequality~\eqref{EcFinal} and~\eqref{EgEc} and take symmetric scaling, we derive that
\begin{equation} \label{symmetric_trans}
    \begin{aligned}
        w^{k+1} \leq C w^k + 
        \theta^k,
        \quad\text{where }\theta^k=
        \begin{bmatrix}
            \theta_1^k \\ \theta_2^k
        \end{bmatrix},
    \end{aligned}
\end{equation}
and
\begin{align*}
 w^k ={} & \left[ \frac{2L}{9\alpha}\sqrt{(3d(1\!-\!p)\!+\!1)(1\!-\!\sigma^2)^3} E_c^k ,\ \  E_g^k \right]^T \\
	    C ={} &  \! \begin{bmatrix}
            1 - \mfrac{1-\sigma^2}{6} & 72\alpha L \left[\mfrac{3d(1\!-\!p)\!+\!1}{(1-\sigma^2)^3}\right]^{\!1/2} \\[0.5em]  
             72\alpha L \left[\mfrac{3d(1\!-\!p)\!+\!1}{(1-\sigma^2)^3}\right]^{\!1/2}& 1-p/2
        \end{bmatrix}, \\
     \theta_1^k ={} & \frac{192(3d(1\!-\!p) \!+\! 1 )^{\frac{3}{2}} N\alpha L^3}{(1\!-\!\sigma^2)^{\frac{3}{2}}} \mathbb{E}\!\left[ \left\| \bar{x}^{k+1} \!-\! \bar{x}^k \right\|^2 \right] \\
       &+ \frac{24\alpha L^3 (3d(1-p)+1)^{\frac{1}{2}}C_u\sum_i(u_i^k)^2 }{(1-\sigma^2)^{\frac{3}{2}}} , \\ 
     \theta_2^k ={} & \ext{-2}8 N(3d(1\!-\!p)\!+\!1)L^2\mathbb{E}\ext{-5}\left[\ext{-2} \left\| \bar{x}^{k+1} \!-\! \bar{x}^k \right\|^2 \ext{-1}\right] \!+\! L^2C_u\!\sum\nolimits_i \!(u_i^k)^2.
\end{align*}

We denote 
\[
c_1 = \frac{1-\sigma^2}{6},~  c_2 = \frac{p}{2},~ c_3 =  72\alpha L \left[\frac{3d(1\!-\!p)\!+\!1}{(1-\sigma^2)^3}\right]^{\!1/2}.
\]
Using the property that the spectral norm equals the spectral radius for real symmetric matrices, we derive that
\begin{equation*} \label{norm_C}
\begin{aligned}
    	\| C \|_2 ={} & 1 - \frac{c_1+c_2}{2} + \frac{\sqrt{c_1^2-2c_1c_2 + c_2^2 + 4c_3^2}}{2} \\
    ={} & 1\!-\! \frac{c_1\!+\!c_2}{2}\!+\!\frac{1}{2} \sqrt{c_1^2+\frac{3\!+\!\sigma^2}{4}c_2^2 + \frac{1\!-\!\sigma^2}{4}c_2^2 - 2c_1c_2 + 4c_3^2}.
\end{aligned}
\end{equation*}  
Using $\alpha L = c\sqrt{\frac{p}{d(1-p) + 1}}$ and $c\leq \big(\frac{1-\sigma^2}{28}\big)^2 $, we derive that
\begin{align*}
& \frac{1\!-\!\sigma^2}{4}c_2^2 - 2c_1c_2 + 4c_3^2 \\
={} &
-\!(1\!-\!\sigma^2)p\!\left(\frac{1}{6}\!-\!\frac{p}{16}\right)
\!+\!\frac{20376c^2(3d(1-p)+1)p}{(d(1\!-\!p)\!+\!1)(1\!-\!\sigma^2)^3} \\
<{} &
-\frac{5(1-\sigma^2)p}{48} + \frac{62208c^2p}{(1-\sigma^2)^3} < 0.
\end{align*}
Consequently, we have
\[
\begin{aligned}
\| C \|_2 \leq{} 
& 1\!-\! \frac{c_1\!+\!c_2}{2}\!+\!\frac{1}{2} \sqrt{c_1^2+\frac{3\!+\!\sigma^2}{4}c_2^2} \\
\leq{} &
1\!-\! \frac{c_1\!+\!c_2}{2}\!+\!\frac{1}{2}\left(c_1+\sqrt{\frac{3\!+\!\sigma^2}{4}c_2^2}\right)= 1 - \chi p,
\end{aligned}
\]
where $\chi = \frac{1}{4} - \frac{1}{8}\sqrt{3+\sigma^2}$ and it is not hard to verify that $\chi \in (\frac{1-\sigma^2}{32}, \frac{1-\sigma^2}{29})$.

Now, from the definition of $E_f^k$ in Lemma~\ref{Sigma_inequality}, 
we take the $\ell_2$ norm on both sides of~\eqref{symmetric_trans} and get
\[
E_f^{k+1} 
        \leq\| C \|_2 E_f^k + \| \theta^k \|
\leq (1-\chi p) E_f^k + \|\theta^k\|.
\]
To bound $\|\theta^k\|$, using the condition on $\alpha$ and by some algebraic calculation, we can show that
\[
\begin{aligned}
\theta_1^k
<{} & \frac{4}{9}(3d(1\!-\!p)\!+\!1\!)NL^2\mathbb{E}\!\left[\ext{-2} \left\| \bar{x}^{k+1} \!-\! \bar{x}^k \right\|^2 \right]
\!+\!\frac{L^2 C_u\sum\nolimits_i(u_i^k)^2}{18} \\
={} & \frac{\theta_2^k}{18},
\end{aligned}
\]
which leads to
$\|\theta^k\|\leq{}\sqrt{1+\frac{1}{18^2}}|\theta_2^k|
\leq{} \frac{9}{8}|\theta_2^k|$ and
\begin{equation} \label{EfFinal}
    \begin{aligned}
        E_f^{k+1} 
        \leq{} & (1-\chi p) E_f^k + \frac{9}{8}L^2C_u\!\sum\nolimits_i \!(u_i^k)^2 \\
        & + 9 (3d(1\!-\!p)\!+\!1)NL^2\mathbb{E}\ext{-5}\left[\ext{-2} \left\| \bar{x}^{k+1} \!-\! \bar{x}^k \right\|^2 \right].
    \end{aligned}
\end{equation}
By induction on~\eqref{EfFinal}, we derive that for $k\geq 1$,
\begin{equation} \label{E_f_Appendix}
    \begin{aligned}
        E_f^k \leq{} & 9(3d(1\!-\!p) \!+\! 1 ) N L^2 \sum_{m=0}^{k-1}\! (1 \!-\! \chi p)^m \mathbb{E}[\| \bar{x}^{k-m} \!-\! \bar{x}^{k-m-1} \|^2 ]  \\ 
        & + (1 \!-\! \chi p)^k E_f^0 + \frac{9L^2C_u}{8} \sum_{m=0}^{k-1}(1 \!-\! \chi p)^m \!\sum\nolimits_i(u_i^{k-m-1})^2.
    \end{aligned}
\end{equation}
Taking sum over iteration $k$ on both sides of the inequality~\eqref{E_f_Appendix} and using~\eqref{sum_sum}, we obtain
\begin{equation} 
    \begin{aligned}
        \sum_{\tau=0}^{k} E_f^{\tau} \leq{} & 
        \frac{9( 3d(1\!-\!p) \!+\! 1 ) NL^2}{\chi p} \sum_{m=0}^{k-1} \mathbb{E}[\| \bar{x}^{m+1} - \bar{x}^m \|^2] \\
        & + \frac{1}{\chi p}E_f^0 + \frac{9L^2C_u}{ 8\chi p}\sum_{m=0}^{k-1} \sum\nolimits_i(u_i^m)^2.
    \end{aligned}
\end{equation}
The proof is now complete.

\section{Proof of Lemma \ref{barxkkbarxk}}
\label{appendix:proof_lemma_barxkkbarxk}

Based on~\eqref{xbar_kplus1}, we have
\begin{equation} \label{barxkkbargk}
    \begin{aligned}
        \mathbb{E}\!\left[ \left\| \bar{x}^{k+1} - \bar{x}^k \right\|^2 \right]={} & \alpha^2\mathbb{E}\!\left[\left\| \bar{g}^k \right\|^2\right] \\
        ={} &\alpha^2\mathbb{E}\!\left[\left\| \nabla f(\bar{x}^k)+\bar{g}^k -\nabla f(\bar{x}^k)\right\|^2\right]
        \\
        \leq{} & 2 \alpha^2 \mathbb{E}\!\left[ \big\| \nabla f(\bar{x}^k) \big\|^2 \right] + 2 \alpha^2 \mathbb{E}\!\left[ \left\| \nabla f(\bar{x}^k)-\bar{g}^k \right\|^2 \right].
    \end{aligned}
\end{equation}
Combining~\eqref{fbarTemp} with~\eqref{barxkkbargk}, we derive that
\begin{equation*}
    \begin{aligned}
        \mathbb{E}\!\left[ \left\| \bar{x}^{k+1} - \bar{x}^k \right\|^2 \right]
        \leq{} & 2 \alpha^2 \mathbb{E}\!\left[ \big\| \nabla f(\bar{x}^k) \big\|^2\right] +  \frac{4\alpha^2 L^2}{N}E_x^k + \frac{4\alpha^2}{N} E_g^k.
    \end{aligned}
\end{equation*}
Using the definitions of $E_f^k$ and $E_c^k$, we have
\[
E_x^k \leq E_c^k \leq 
\frac{9\alpha}{2L}
\left[\frac{1}{(3d(1\!-\!p)\!+\!1)(1\!-\!\sigma^2)^3}\right]^{\!\frac{1}{2}}E_f^k,\ 
E_g^k \leq E_f^k.
\]
Consequently, by using the condition on $\alpha$, we have
\begin{equation}
    \begin{aligned}
         \mathbb{E}\!\left[ \left\| \bar{x}^{k+1} - \bar{x}^k \right\|^2 \right]
         \leq{} & 2 \alpha^2 \mathbb{E} \!\left[ \big\| \nabla f(\bar{x}^k) \big\|^2 \right] +  \frac{4\alpha^2}{N} E_f^k  \\
         &  + \frac{4\alpha^2 L^2}{N} \cdot \frac{9\alpha}{2L}
\left[\frac{1}{(3d(1\!-\!p)\!+\!1)(1\!-\!\sigma^2)^3}\right]^{\!\frac{1}{2}} E_f^k \\
        \leq{} & 2 \alpha^2 \mathbb{E}\!\left[ \big\| \nabla f(\bar{x}^k) \big\|^2 \right] + \frac{5\alpha^2}{N} E_f^k.
    \end{aligned}
\end{equation}
We complete the proof.

\bibliographystyle{IEEEtran}       
\bibliography{autosam}    

\end{document}